\newcommand{\titel}
{Equational Axioms for Expected Value Operators\\
}
\newcommand{\midd}{\:|\:}
\newtheorem{theorem}{Theorem}
\newtheorem{proposition}{Proposition}
\newtheorem{problem}{Problem} 
\newtheorem{definition}{Definition}  
\theoremstyle{definition}
\newcommand{\sg}{\operatorname{{\mathbf s}}}
\newcommand{\reals}{\ensuremath{\mathbb{R}}}
\newcommand{\rats}{\ensuremath{\mathbb{Q}}}
\newcommand{\nats}{\ensuremath{\mathbb{N}}}
\newcommand{\events}{\ensuremath{\mathbb{E}}}
\newcommand{\pf}{\ensuremath{\widehat{P}}}
\newcommand{\mead}{\ensuremath{\mathbb{M}}}
\newcommand{\cv}{\ensuremath{\mathbb{CV}}}
\newcommand{\Md}{\ensuremath{\mathsf{Md}}}
\newcommand{\BA}{\ensuremath{\mathsf{BA}}}
\newcommand{\Sign}{\ensuremath{\mathsf{Sign}}}
\newcommand{\PFBC}{\ensuremath{\mathsf{PFBC}}}
\newcommand{\PFA}{\ensuremath{\mathsf{PFA}}}
\newcommand{\PFF}{\ensuremath{\mathsf{PFF}}}
\newcommand{\DO}{\ensuremath{\mathsf{DO}}}
\newcommand{\ABS}{\ensuremath{\mathsf{ABS}}}
\newcommand{\EPV}{\ensuremath{\mathsf{EPV}}}
\newcommand{\EV}{\ensuremath{\mathsf{EV}}}
\newcommand{\ECV}{\ensuremath{\mathsf{ECV}}}
\newcommand{\EPCV}{\ensuremath{\mathsf{EPCV}}}
\newcommand{\UCV}{\ensuremath{\mathsf{UCV}}}
\newcommand{\Cond}{\ensuremath{\mathsf{Cond}}}
\newcommand{\MBPC}{\ensuremath{\mathsf{MBPC}}}
\title{\titel}
\author{
	Jan A.\ Bergstra 
	\\
\\
  {
	  Informatics Institute,
	  University of Amsterdam}\footnote{\small Email: \texttt{j.a.bergstra@uva.nl, janaldertb@gmail.com}. This is a 
	  significantly rewritten and improved version under a new title of a previous report with title 
	  ``Conditional values in signed meadow based probability calculus'' (\texttt{https://arxiv.org/abs/1609.02812v3}).
	}
}
\date{\small{May 25, 2019}}
\begin{document}
\maketitle

\begin{abstract}
An equational axiomatisation of probability functions for one-dimensional event spaces 
in the language of signed meadows is expanded with conditional values.  
Conditional values constitute a so-called signed vector meadow.
In the presence of a probability function, 
equational axioms are provided for expected value, variance, covariance, and correlation squared, each defined
 for conditional values.

Finite support summation is introduced as a binding operator on meadows which simplifies formulating 
requirements on probability mass functions with finite support. 
Conditional values are related to probability mass functions and to random variables. 
The definitions are reconsidered in a finite dimensional setting.
~\\[4mm]
\emph{Keywords and phrases:}
Boolean algebra, signed meadow, vector meadow, probability function, probability mass function, conditional value.
\end{abstract}

\section{Introduction}
In~\cite{BP2016} a proposal is made for a loose algebraic specification  probability functions in the context of signed meadows. 
The objective of this paper is to proceed on the basis of  the results of~\cite{BP2016} and to provide an 
account of some basic elements of probability calculus including  
probability mass functions,  probability functions, expected value operators, variance, covariance, 
correlation, independence, sample space, and random variable. 
 Ample use is made of the special properties of meadows, most notably $1/0 = 0$, 
and the presentation optimises the match with the equational and axiomatic setting of meadows. 

A conventional ordering of the introduction of concepts in probability theory is as follows: 
(i) given a sample space $S$, an event space $E$ is introduced as a subset of the power set of $S$. 
Then (ii) probability functions are defined over event spaces and (iii) discrete random variables are introduced 
as real functions on $S$ with a countable support. Given these ingredients, (iv) expected value and variance 
are defined for discrete random variables and 
covariance, and correlation  are defined for pairs of random variables. (v) Subsequently probability 
mass functions are derived from 
random variables, (vi) multivariate discrete random variables are introduced as vectors of random variables on the same event space, 
and (vii) joint probability mass functions are derived from multivariate random variables, 
(viii) independence is defined for joint probability mass functions, 
and (ix) marginalisation is defined as a transformation on joint probability mass functions. 
(x) The development of concepts and definitions is redone for the continuous 
case with probability distributions replacing probability mass functions and general 
random variables replacing discrete random variables.

Below the same topics are discussed, though under restrictive conditions, 
and in a different order. A central role is played by probability mass functions with finite support. 
These are meadow valued functions taking nonzero values for finitely many arguments,
and such that the sum of the nonzero values adds up to one. As a conceptual cornerstone the 
space (sort) of conditional values is introduced.

As a consequence of these  choices the definition for the expected value operator, 
and the definitions  of (co)variance and correlation which directly depend on expected values, 
will be repeated in three different settings: 
(i) for probability mass functions with finite support, (ii) for an event space equipped with a sort of conditional values  and a
probability function, and (iii) for a multidimensional event space equipped with a sort of conditional values and a family 
of multivariate probability functions.

With this order of presentation  an adequate match is obtained with 
meadow based equational axiomatisations. The account of probability mass functions is independent of probability theory. 
By defining expected values and derived quantities on conditional values over an 
event structure the incentive for introducing a sample space is avoided, thus avoiding the introduction of 
a proper subsort of samples for the sort of events, and thereby
maintaining the simplicity of the use of a loose equational  specification for probability functions.

\subsection{Survey of the paper}
In Section~\ref{MVSM} meadows are discussed and so-called signed vector meadows are introduced. A novel binding operator, 
called finite support summation (FSS) is introduced and examples of its use are provided.

In Section~\ref{PMFswfs} the notion of a probability mass function (PMF) with finite support is introduced and its formal specification
in the setting of meadows is provided with the help of finite support summation. By default 
a PMF is assumed to be univariate. 

Marginalisation is defined as a family of transformations from a PMF with 
more than one argument (i.e. a multivariate PMF) to a PMF with a smaller number of arguments. 
Expected value and variance are defined as  functionals on univariate PMFs and 
covariance and correlation are defined as functionals on bivariate PMFs.

Having developed an account of PMFs independently of axioms for probability functions, Section~\ref{PFsCVs} 
proceeds with a recall from~\cite{BP2016} of the combination of an event space (a Boolean algebra) and value space (a meadow), 
and the equational specification of a probability function. 
Two versions of Bayes' rule are considered and the relative position of these statements w.r.t. the various axioms is examined.

A conditional operator is applied to events 
and the results of the operator are collected in an additional sort $V_C$ of so-called conditional values (CVs), which constitutes a
so-called  finite dimensional vector space  meadow. 

Thinking in terms of outcomes of a probabilistic process one may assume that the process produces as an outcome an entity of some sort. 
Events from an event space $\events$ represent assessments about the outcome. It is plausible that besides Boolean assessments also 
values, for instance rationals or reals, are considered attributes of an outcome. A CV directly relates values  to events.  
In the presence of a probability function  two equations specify the expected value of a CV.

From~\cite{BP2016}  the specification of probability function families relative to an arity family  is imported, and in Section~\ref{MDC}
an corresponding axiomatisation for expected value operators is provided for the finite dimensional case.

According to~\cite{BP2016} the equations of $\BA + \Md + \Sign + \ABS+ \PFBC_{\!P}+ \PFA_P$ 
constitute a finite equational basis for the class of Boolean algebra based, real valued probability functions, and the
 proof theoretic results, viz. soundness and completeness, concerning signed meadows 
of~\cite{BBP2013, BBP2015} extend to the case 
with Boolean algebra based probability functions. The axiom system $\BA + \Md + \Sign + \ABS+ \PFBC_{\!P}+ \PFA_P$ 
is merely a particular formalisation of Kolmogorov's 
axioms for probability theory phrased in the context of  meadows 
and the completeness result asserts the completeness of this particular formalisation w.r.t. its standard model. 
The main result of the paper is to provide an extension of this axiomatisation 
 with conditional values  and expected value operators $E_P$.

\subsection{On the  use of equational logic}
By working in first order  equational logic, 
I am able to make use of an axiomatic style when developing basic elements of
the  theory of probability. The exposition therewith is somewhat biased towards formalisation.

We will develop an incremental collection of equational specifications. These specifications can be understood in three different ways:
(a) as formalisation of a preferred underlying mathematical reality, (b) as axioms determining what is claimed with one or more underlying mathematical realities primarily as a principled method  for justification, (c) as a combination of (a) and (b) in which both views (a) 
and (b) have been assimilated, thus following the terminology of~\cite{NicaudBG2001}, in such a manner that when appropriate, 
or when required for 
clarification explicit dis-assimilation of both views (again following~\cite{NicaudBG2001}) is an option.

The objective of formalisation and axiomatisation in this paper is not inherited
from an overarching intention to avoid mistakes, as is the most prominent rationale for formalisation  in computer science.
Instead the objective is to use the axiomatic approach to obtain a uniform degree of  clarity about assumptions, working hypotheses, 
patterns of reasoning, and patterns of calculation. 

%In order to develop a valid presentation from the point of view of formal logic it may practical to write all assertions in formal notation 
%and to have only rudimentary basics explained in conventional mathematical terms.
%And in the presence of formalised fragments of text, adjacent fragments written in 
%conventional mathematical style may appear to lack rigour. Nevertheless a balance with readability is required, thus leaving room for 
%ad hoc conventions.
I will not distinguish between names for constants and functions of meadows and their mathematical counterparts. 
Rather than writing say $\reals_0 \models t= r$,  in cases where 
 ordinary  mathematics suggests writing $t= r$ and provided there is no risk of confusion, that is a preferred arithmetical datatype is supposed to be known to the reader, ``$t=r$'' is preferred. 
 On the other hand sort names, e.g. $E$ for events, will be distinguished from the corresponding carriers, (e.g.
 $|\events|$ in the case of $E$) and a 
 specific probability function with intended to serve as an interpretation of $P$ will be referred to as $\pf$.

In summary: below equational logic is applied with the following objectives in mind:  
(i) to demonstrate that an axiomatic approach in terms of equational logic 
to elementary probability calculus is both feasible and attractive, 
(ii) to illustrate  the compatibility of an axiomatic approach to probability calculus  
with  conventional mathematical style and notation, and 
(iii) to provide optimal clarity about the assumptions which underly the various definitions, 
while (iv) using meadows as a tool throughout the presentation. 

\subsection{Assimilation and dis-assimilation: a plurality of options}
In~\cite{} the idea is put forward that persons working with mathematical text may or may not at a certain stage 
assimilate, that is perceive as one and the same, different notations. For instance it is common not to distinguish $1$ and $+1$, but
one may well imagine a stage in which these notations are distinguished. I mentioned already that not making a distinction between 
constants $0$ and $1$ and the corresponding values in a chosen mathematical domain may be understood as a result of assimilation
which may be reversed temporarily by way of implicit or explicit dis-assimilation.

In Paragraph~\ref{MofCV} below so-called conditional values are introduced as elements of a sort $V_C$. 
These are understood as a version of the numbers, 
though with conditions, the connection between conditional values and (ordinary, i.e. unconditional) 
values is made via an embedding $v \colon V \to CV$ 
Here it is an option to assimilate conditional values and unconditional values and to drop occurrences of $v(\_)$. 
I have chosen not to do so, and to leave unconditional values and conditional values dis-assimilated in order to allow a better
focus on the fact that the conditional values constitute a vector space meadow rather than a meadow.

\begin{table}
\centering
\hrule
\begin{align}
\label{eq:7}
	(x+y)+z &= x + (y + z)\\
\label{eq:8}
	x+y     &= y+x\\
\label{eq:9}
	x+0     &= x\\
\label{eq:10}
	x+(-x)  &= 0\\
\label{eq:11}
	(x \cdot y) \cdot  z &= x \cdot  (y \cdot  z)\\
\label{eq:12}
	x \cdot  y &= y \cdot  x\\
\label{eq:13}
	1\cdot x &= x \\
\label{eq:14}
	x \cdot  (y + z) &= x \cdot  y + x \cdot  z\\
\label{eq:15}
	(x^{-1})^{-1} &= x \\
\label{eq:16}
	x \cdot (x \cdot x^{-1}) &= x
\end{align}
\hrule
\caption{Md:  axioms for a meadow}
\label{Md} 
\end{table}
\section{Meadows and vector space meadows}
\label{MVSM}
Numbers will be viewed as elements of a meadow rather than as elements of a field. 
For the introduction of meadows and elementary theory about meadows
I refer to~\cite{BT2007,BBP2013,BBP2015} and the papers cited there.
 I will copy the tables of equational 
axioms for meadows and for the sign function which plays a central role below. 

Below $\reals$ will denote some specific choice of a structure for real numbers. The domain $|\reals|$ of $\reals$ is a particular
set, the choice of which depends on how one prefers to define real numbers. There is no preference for a specific choice.
$\reals_0$ is the unique expansion of $\reals$ with a zero-totalised inverse function $\_^{-1}$. $\reals_0$ is referred to 
as the meadow of reals.

With $(\reals_0,\sg)$ the expansion of the meadow $\reals_0$ with the sign function is denoted. 
The following completeness result was obtained in~\cite{BBP2015}.
\begin{theorem} 
\label{Compl1}
A conditional equation in the signature of signed meadows is valid in 
$(\reals_0,\sg)$ if and only if it is provable from the axiom system $\Md+\DO + \Sign$.
\end{theorem}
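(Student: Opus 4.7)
The plan is to treat each direction separately. Soundness is routine: verify that each axiom in Ms and in Sign is valid in $(\reals_0,\sg)$ by direct computation on the real operations (with the convention $0^{-1}=0$) and the ordinary sign function, and note that the inference rules of conditional equational logic preserve validity in any structure.

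For completeness, my approach is representation-theoretic. Given a conditional equation $\bigwedge_i t_i = s_i \to t = s$ that is valid in $(\reals_0,\sg)$ but, for contradiction, assumed not provable from Ms+Sign, I would aim to exhibit an assignment in $(\reals_0,\sg)$ satisfying the premises yet refuting the conclusion. The plan has three steps. First, prove a symbolic normal form lemma: using Ms and Sign, every term in the signature rewrites provably to a canonical combination of polynomial numerators and denominators guarded by sign factors, exploiting that $\sg(x)$ can be treated equationally as a three-valued case indicator in $\{-1,0,1\}$ (the identity $\sg(x)\cdot(\sg(x)^2-1)=0$ should be derivable). Second, form the quotient of the free signed meadow on the variables of the conditional equation modulo the congruence generated by the hypotheses $t_i = s_i$; by the non-provability assumption this quotient falsifies $t=s$. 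Third, case-split on the signs of the finitely many subterms appearing in the conditional equation to decompose this quotient into finitely many finitely generated cancellation sub-meadows, each an ordered integral domain that embeds into an ordered field and, via its real closure, into $(\reals_0,\sg)$. Composing the quotient map with one of these embeddings yields the refuting assignment and the desired contradiction.

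The main obstacle I expect is the last step. A signed meadow need not be globally cancellative and may carry nontrivial zero-divisors precisely because of the convention $1/0 = 0$, so the case-split on signs must do the work of isolating the cancellative fragment from the degenerate one while preserving the validity of all hypotheses $t_i = s_i$ simultaneously. Unlike the purely equational setting, where generic points suffice, conditional equations can collapse the free structure in subtle ways if the axioms do not pin down enough behaviour of $\sg$ on sums and products. Confirming that Ms+Sign is exactly strong enough to carry out this decomposition faithfully, and that the finite case analysis is sound for conditional premises, is the technical heart of the argument and is what makes the result nontrivial in~\cite{BBP2015}.
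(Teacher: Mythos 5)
First, note that the paper offers no proof of this statement: the theorem is imported verbatim from~\cite{BBP2015} (and ``Ms+Sign'' is evidently a typo for Md+Sign), so there is no in-paper argument to compare yours against; the relevant benchmark is the proof in~\cite{BBP2013,BBP2015}, which the author elsewhere notes rests on the decidability of the first-order theory of the reals. Your soundness direction is routine and fine, and your normal-form and sign case-split apparatus (including the derivability of $\sg(x)\cdot(\sg(x)^2-1)=0$, which does follow from $\sg(1_x)=1_x$, $\sg(x\cdot y)=\sg(x)\cdot\sg(y)$ and $x\cdot(x\cdot x^{-1})=x$) is in the spirit of the generic basis theorem used there.

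The genuine gap is in your final step. A finitely generated ordered field need not embed order-preservingly into $\reals$: for example $\rats(x)$ ordered with $x$ infinitely large is finitely generated but non-Archimedean, and its real closure certainly does not embed into $(\reals_0,\sg)$. Since $\sg$ is determined by the order, a mere field embedding is useless here --- the order must be preserved to preserve the interpretation of $\sg$, and that is exactly what can fail. The quotient of the free signed meadow by the congruence generated by the hypotheses $t_i=s_i$ may carry only non-Archimedean orders on the relevant cancellative factor, so ``composing the quotient map with one of these embeddings'' is not available. The standard repair, and the one underlying~\cite{BBP2015}, is not to embed at all: the failure of the conditional equation in the real closure of the ordered factor is expressed by a single existential first-order sentence (``there exist $x_1,\dots,x_n$ satisfying all $t_i=s_i$ and $t\neq s$''), and by Tarski's completeness and model-completeness of the theory of real closed fields that sentence transfers to $\reals$, yielding the refuting assignment in $(\reals_0,\sg)$ directly. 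Without that transfer step (or an Archimedeanness argument you have not supplied), your proof does not close. A secondary point you should make explicit is why the sign case-split is sound in the presence of conditional premises: the cases are exhaustive semantically, not as a provable disjunction of equational logic, so the decomposition must be carried out on the model side for each candidate refuting assignment rather than inside the derivation.
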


\begin{table}
\centering
\hrule
\begin{align}
x^2 &= x \cdot x \\
x/y &= x \cdot y^{-1}\\
1(x) &= x/x \\
0(x) &= 1 -x/x \\
x\lhd y \rhd z &= 1(y) \cdot x + 0(y) \cdot z
\end{align}
\hrule
\caption{DO:  axioms for derived operators}
\label{DO} 
\end{table}

The  axioms in Table~\ref{Md} specify the variety of meadows, while Table~\ref{DO} introduces some  
function symbols by means of defining equations serving as explicit definitions for derived operations. 
Table~\ref{t:a} specifies the sign function, and Table~\ref{Abs} introduces the absolute value function. 
Following~\cite{BBP2013}, a meadow that satisfies the (nonequational) 
implication IL from Table~\ref{IL} is called a cancellation meadow.

\begin{table}
\centering
\hrule
\begin{align}
\label{eq:17}
\sg(1(x))&=1(x)\\
\label{eq:18}
\sg(0(x))&=0(x)\\
\label{eq:19}
\sg(-1)&=-1\\
\label{eq:20}
\sg(x^{-1})&=\sg(x)\\
\label{eq:21}
\sg(x\cdot y)&=\sg(x)\cdot \sg(y)\\
\label{eq:22}
%0(\sg(x)-\sg(y))\cdot (\sg(x+y)-\sg(x))&=0\\
0(\sg(x)-\sg(y))\cdot \sg(x+y)&=0(\sg(x)-\sg(y))\cdot \sg(x)
\end{align}
\hrule
\caption{Sign: axioms for the sign operator}
\label{t:a}
\end{table}

\begin{table}
\centering
\hrule
\begin{align}
|x| &= \sg(x) \cdot x 
%x <_v y &= 0_{1-\sg(y-x)}  \nonumber \\
%x \leq_v y &= 0_{1-\sg(\sg(y-x) +1)} \nonumber\\
%x < y &\equiv_{\texttt{def}} \sg(y-x) = 1 \nonumber \\
%x \leq y &\equiv_{\texttt{def}} \sg(\sg(y-x) +1) = 1\nonumber
\end{align}
\hrule
\caption{ABS: defining axiom the absolute value operator}
\label{Abs}
\end{table}
\begin{table}
\centering
\hrule
\begin{align}
x \neq 0 \to x \cdot x^{-1} = 1 \nonumber
\end{align}
\hrule
\caption{IL:  inverse law}
\label{IL} 
\end{table}

\subsection{Signed vector space meadows}
\label{SVSM}
Let $e_1,\ldots,e_n$ be a series of pairwise distinct objects outside the meadow $\mathbb{M}$, and outside $\Sigma_\Md$. 
The meadow 
$\mathbb{M} \langle e_1,\ldots,e_n\rangle$ is defined as a direct sum of copies of $\mathbb{M}$:
\[\mathbb{M} \langle e_1,\ldots,e_n \rangle = e_1 \cdot \mathbb{M} \oplus\ldots \oplus e_n \cdot \mathbb{M}\]
Here the $e_i$ serve as new constants for orthogonal ($e_i \cdot e_j = 0$ for $i \neq j$) idempotents ($e_i \cdot e_i = e_i$)
such that the set $\{e_1,\ldots,e_n\}$ is complete ($e_1 + \ldots + e_n =1$). Moreover it is assumed that $\sg(e_i) = e_i$. 
%\marginpar{Dit vergt een check, voldoet dit aan de verg. voor de sign?}
Elements of this structure are given by sequences
$(l_1,\ldots,l_n) \in \mathbb{M}^n$ representing the object $e_1 \cdot l_1 + \ldots e_n \cdot l_n$. The meadow operations and sign
are performed coordinate-wise, e.g. $\sg(e_1 \cdot l_1 + \ldots + e_n \cdot l_n) = e_1 \cdot \sg(l_1) + \ldots + e_n \cdot \sg(l_n)$, 
thus obtaining an $n$-dimensional vector space over $\mathbb{M}$.
For $n=1$ the construction brings noting new:  $\mathbb{M}\langle e_1\rangle \cong \mathbb{M}$. For $n>1$, and assuming that $\mathbb{M}$ is non-trivial ($\mathbb{M} \models 0 \neq 1$) the resulting structures are not cancellation meadows, i.e. 
$\mathbb{M} \langle e_1,\ldots,e_n \rangle  \not \models \mathrm{IL}$. 
%If the number of idempotents of a meadow is finite it is even
%because with idempotency of $e$ comes that $1-e$ is also an idempotent. Thus we may assume that $n$ is even.

$\Sigma_{\Md, e_1,\ldots,e_n}$ is the signature $\Sigma_\Md$ expanded with constants $ e_1,\ldots,e_n$.
$\mathbb{M}_{e_1,\ldots,e_n} \langle e_1,\ldots,e_n \rangle$ is the expansion of 
$\mathbb{M} \langle e_1,\ldots,e_n \rangle $ with (the $e_i$ serving as names for  the new orthogonal idempotents. 
Now 
$\mathbb{R}_0(\sg)_{e_1,\ldots,e_n} \langle e_1,\ldots,e_n \rangle \models 
	\Md + Sign + E_{\langle e_1,\ldots,e_n \rangle}$, where $E_{ \langle e_1,\ldots,e_n \rangle}$ captures
the mentioned identities involving the $e_i$: idempotence for the $e_i$, orthogonality for $e_i$ and $e_j$ with $i\neq j$, 
 completeness, and  the equations for $\sg(-)$.

\begin{problem} Is the  axiom system $\Md + Sign + E_{\langle e_1,\ldots,e_n\rangle }$ 
complete for the equational theory of the structure $\mathbb{R}_0(\sg)_{e_1,\ldots,e_n} \langle e_1,\ldots,e_n \rangle $?
\end{problem}

%\begin{problem}
%The corresponding problem in the presence of a pointwise square root function (while assuming  $\sqrt -x = -\sqrt x$ 
%as in~\cite{BBP2013}).
%\end{problem}
%The completeness problem of~\cite{BBP2015} seems to carry over without complications, but there are significant details to be adapted.
%$\Sigma_\Md$ denotes the signature for meadows and $\Sigma_m +\,\sg$ denotes its extension with name and arity of the sign function.
%For a structure $\mathbb{A}$ and a signature $\Gamma$, $\mathbb{A}\mid_\Gamma$ is the reduct of $\mathbb{A}$ to $\Gamma$. 
%Soundness for $\Md + Sign + E_{\langle e_1,\ldots,e_n\rangle }$ plus completeness in the case of $n=0$  implies that the equational theory for the reduced vector meadows $(\mathbb{R}_0(\sg)\langle e_1,\ldots,e_n\rangle )\midd_{\Sigma_{m}+\, \sg}$
%is the same for all $n$. The situation might be different for conditional equations, however:
%
%\begin{problem}
%Are the conditional equational theories of the structures $(\mathbb{R}_0(\sg)\langle e_1,\ldots,e_n\rangle)\midd_{\Sigma_{m}+\, \sg} $ the same for all $n$?
%\end{problem}
With disjunctive assertions (among which IL $\equiv  1(x) = 0 \vee 1(x) = 1$) discrimination between 
vector space meadows of different dimension is possible.\\ Let 
$\phi \equiv_{def} x \cdot x = x \wedge y \cdot y = y \wedge x+ y = 1 \wedge x \cdot y = 0 \to (x = 0 \vee y = 0)$.
Then $\mathbb{R}_0(s)\langle e_1 ,e_{2} \rangle  \not \models \phi$ while : $\mathbb{R}_0(s)\langle \rangle   \models \phi$.

\subsection{Representing functions by expressions}
\label{Conventions}
The expression language may be extended with lambda abstraction thereby introducing $\lambda x.t$ as an
expression denoting the function which maps $v \in V$ to $[v/x]t$, i.e. the result of substituting $v$ for $x$ in $t$. A disadvantage of this
approach is that it imports typed $\lambda$-calculus, definitely a non-trivial subject.

Another option is to use $L\, x.t$ to represent the same function. Now if $y$ does not occur freely in $t$, then 
$L\, y.[y/x]t$
constitutes a different representation for the same function, i.e. unlike in the $\lambda$-calculus alpha conversion does not apply
to $L\, x.t$.

In statistical theory Jeffrey's notation $t[\bullet]$, with $t[-]$ a context with zero or more ``holes'', 
stands for $\lambda x.t[x]$, with $x$ a fresh variable. Finally function abstraction may be left implicit when a specific binding mechanism is employed.

When summation over a bound variable $x$ is applied to a term $t$ or to a context $t[-]$, these four options lead to different notations: 
$\sum^\star(\lambda x.t),\sum^\star (L\,x.t),  \sum^\star t[\bullet]$ and $\sum^\star_x t$, respectively. 
There is no need to choose a single convention from 
these four options and below it is supposed  to be clear from the 
context which one of these conventions is used in each particular case.

\subsection{Finite support summation}
\label{FSS}
Given a meadow $\mead$ and a term $t$ in which variable $x$ may or may not occur 
it may be useful to determine the summation of 
all substitutions (or rather interpretations) $[v/x]t$ with $v \in | \mead |$. 
This sum is unambiguously defined, however, if the support in $\mead$ of $L\, x.t$ is finite, that is if there are only 
finitely many values $v \in | \mead |$ such that $[v/x]t$ is nonzero.

The expression $\sum_{x}^\star t $ denotes in $\mead$ the sum of all $[v/x]t$ if at most finitely 
many of these substitutions $[v/x]t$ yield a non-zero value and 0, otherwise. 

The $\sum_{x}^\star $ operator will be referred to as finite support summation (FSS). At this stage we have little information about the logical properties of this binding mechanism on terms but it is semantically unproblematic, being
well-defined in each meadow,  and it will be used below for presenting several  definitions. 
We first notice some technical facts concerning FSS, assuming the interpretation of equations is performed in  
an arbitrary  cancellation meadow $\mead$. 
\begin{enumerate}
\item $L\, x.t$ has finite support iff $L\, x.t/t$ has finite support.
\item $\sum_{x}^\star 0=0$, $\sum_{x}^\star 0(x) =1$, 

\item $\sum_{x}^\star 1 = 0$. To see this first notice that in an infinite  meadow 1 is nonzero for infinitely many $x$ and thus $\sum_{x}^\star 1 = 0$. A finite meadow has nonzero characteristic (say $p$) and $\sum_{x}^\star 1$ counts up to the cardinality of the structure, which is a multiple of $p$ and therefore vanishes modulo $p$.  
\item  $\sum_{x}^\star 1(x)= 0$ if and only if $\mead$ is infinite.
\item  $\sum_{x}^\star 1(x )= -1$ if and only if $\mead$ is finite.
\item $\sum_{x}^\star (t +0(x)) = (\sum_{x}^\star t) + 1$ if and only if $L\, x.t$ has finite support. 
\item $\sum_{x}^\star (t +0(x)) = \sum_{x}^\star t$ if and only if $L\, x.t$ has infinite support.

\item If $x \notin FV(t)$ then $\sum_x^\star(r \cdot t) = (\sum_x^\star r)  \cdot t)$.
\item If $x \notin FV(t)$ then $\sum_x^\star(x \cdot 0(t-x)) =  t$ and $\sum_x^\star(x \cdot 1(t-x)) =  (\sum_x^\star t) - [0/x]t$.
\item If both $L\, x.t$ and $L\, x.t$ have finite support then $(\sum_x^\star t)+(\sum_x^\star r) = \sum_x^\star (t+r).$
\end{enumerate}
If, moreover, $\mead$ is signed:
\begin{enumerate}
\item $\sum_{x}^\star 1(x) =0$, because a signed meadow is infinite.
\item Consider context $ C[-]$ with  $C[X] = 1(\sum_{x}^\star 1(X))\lhd (\sum_{x}^\star (X +0(x)) - \sum_{x}^\star X)\rhd 1,$ 
then $C[t] = 1$ if and only if the support of $L\, x.t$ is nonempty, and otherwise $C[t] = 0$.
\item $C[t] \cdot (\sum_{x}^\star (t +0(x)) - (\sum_{x}^\star t)) \cdot 0(1-\sum_{x}^\star \sg(t ))=0$ if  and only if the support of 
$L\, x.t$ is a singleton.
\end{enumerate}
\begin{proposition} 
\label{SuppNe}
$L\, x. t$ has finite support in $\mathbb{Q}_0$ if and only if it has finite support in $\mathbb{R}_0$,  
\end{proposition}

\begin{proof} Because $\mathbb{Q}_0$ is  a substructure of $\mathbb{Q}_0$ the number of non-zero values of $\lambda x.t$ in 
$\mathbb{Q}_0$ cannot exceed the number of nonzero values in  $\mathbb{R}_0$ so the if part is immediate. 
Now for ``only if'' suppose that $\lambda x.t$
has infinitely many non-zero values in $\mathbb{R}_0$. In~\cite{BBP2013} it is shown that non-zero $t(x)$ is provably equal to a sum of
simple fractions, i.e. fractions for which numerator and denominator are each nonzero-polynomials.
This implies that $\lambda.t(x)$ is discontinuous on at most finitely many arguments so that it must be nonzero at some real argument $r$ where it is continuous at the same time. This implies that $\lambda.t(x)$ is nonzero in some neighbourhood $(r-\epsilon,r+\epsilon)$ of $r$ so that it is nonzero on the
infinitely many rational arguments in this same neighbourhood. It follows that $\lambda x.t$ has infinite support in $\rats_0$.
\end{proof}
\begin{problem} Is there a context $C[-]$ (not involving $\sg$) so that for all meadow expressions without sign and for all cancellation meadows
(in particular those with non-zero characteristic) $C[t]=0$ equals $0$ if $t$ has empty support and $C[t]=1$ otherwise?

\end{problem}
\begin{problem} Consider the meadows $\reals_0$ enriched with FSS. Is equality between closed terms for this structure 
computably enumerable, and if so is it decidable?
\end{problem}
\begin{problem} Consider the meadows $\rats_0$ enriched with FSS. Is equality between closed terms for this structure 
decidable?
\end{problem}

\subsection{Multivariate finite support summation}
The multivariate case of FSS operations requires separate definitions for each 
number of variables because a stepwise reduction to  the definition for the univariate case is unfeasible. 
To demonstrate this difficulty we consider the bivariate case only, the case with three or 
more variables following the same pattern. In a meadow $\mead$, 
$\sum_{x,y}^\star t $ produces 0 if for infinitely may pairs of values $a,b \in  | \mead |$ 
the value of $[a/x][b/y]t$ is nonzero, otherwise 
it produces the sum of the finitely many nonzero values thus obtained. 

The need for expressions of the form $\sum_{x,y}^\star t $ transpires from an elementary example, 
which demonstrates that a 2-dimensional FSS cannot be simply reduced to a composition of 2 occurrences of a 
1-dimensional FFS.
Let $t(x,y) = 0(x) \cdot  0(y) + 0(1-x).$ 
Because  $t(1,y)= 1$ for all $y$, $t(x,y)$ is nonzero on infinitely many pairs of values,  so that
\[\sum_{x,y}^\star  t(x,y)=0.\]
Now notice that $\sum_{y}^\star  t(0,y)=1$,  $\sum_{y}^\star  t(1,y)=0$, and if $x\neq 0 \wedge x \neq 1$, 
$\sum_{y}^\star  t(x,y)=0$. It follows that
\[\sum_{x}^\star \sum_{y}^\star  t(x,y)  =1.\]

\section{Probability mass functions  with finite support}
\label{PMFswfs}

The main application of FSS in this paper is to enable the following definition of what it means for a term to represent a 
finitely supported probability mass function. Probability mass function will be abbreviated as PMF.
Finitely supported PMFs constitute a special case of ``arbitrary'' PMFs , a more general notion which cannot easily be defined on an 
arbitrary signed meadow, and which will not be used in the sequel.
\begin{definition}
\label{PMFwfs}
Given a signed meadow $\mead$, a pair $(t;x)$ consisting of a term and variable $x$, represents a 
PMF with finite support  $\mead$ if $\mead \models |t| = t$ and $\mead \models \sum_{x}^\star t = 1$.

\end{definition}
A PMF with finite support is also called a finitary PMF or a finitely supported PMF. The use of terminology from probability theory 
requires some justification. Indeed PMFs occur in probability theory where these comprise precisely all
nonnegative functions from reals to reals with a countable support so that the sum of all non-zero values equals 1. 
With this fact in mind, and working in the signed meadow $\reals_0(\sg)$, the two requirements of 
Definition~\ref{PMFwfs} indeed guarantee that the function represented by $L\,x.t$ is a PMF 
with finite support according to standard terminology.

The property of being a representative of a finitely supported PMF is sensitive to the meadow at hand.
For instance consider the expression $t$ given by
\[ t = 0(x^2-2)\cdot ((1+ \sg(x))\cdot  x +(1- \sg(x)) \cdot (2-x))/4.\] In $\reals_0$ the function description $L\,x.t$ represents a finitary PMF. To see this notice that $L\,x.t$  takes non-zero values only in $-\sqrt{2}$ and $\sqrt{2}$  where it has values $1 -\sfrac{1}{2} \cdot \sqrt{2}$ and 
$\sfrac{1}{2}
\cdot  \sqrt{2}$ respectively, so that $L\, x.t$ represents a finitary PMF, while  in $\rats_0$ it is not the case that $L \,x.t$ represents a 
finitary PMF  because $t(q)$ vanishes for all $q \in \rats_0$ with the implication that
$\sum^\star_x t = 0$.
On the other hand when considering $t^\prime(x) = t(x)+ 0(x)$  it turns out that $L\, x.t$ represents a finitely supported 
PMF in $\rats_0$ while it fails to do so in $\reals_0$.

\subsection{Multivariate PMFs with finite support}
Given a signed cancellation meadow $\mead$, a joint PMF with finite support of arity $n$ is a 
function $L\,x_1,\ldots, x_n.F(x_1,\ldots,x_n)$ 
from $\mead^n$ to $\mead$
which satisfies these two conditions: 
\begin{enumerate}
\item $\sum_{x_1,\ldots,x_n}^\star F(x_1,\ldots,x_n) = 1,$ and
\item for all $x_1,\ldots, x_n \in \reals_0^n$, $F(x_1,\ldots,x_n) = |F(x_1,\ldots,x_n)|.$ 
\end{enumerate}

For example assuming that information about the graph of a joint 
PMF with finite support, with exception of argument vectors for which the result vanishes,  is
encoded in a set of triples:
 $\{(y_{1,1},y_{2,1},z_1),\dots,(y_{1,n},y_{2,n},z_n)\},$ a corresponding function expression $F$ for the same joint 
 PMF with key variables $x_1$ and $x_2$  is as follows:
\[F(x_1,x_2) = 
\sum_{i=1}^{n} ( 0(x_{1}-y_{1,i}) \cdot 0(x_{2}-y_{2,i}) \cdot z_i)\]

\subsection{Marginalisation and independence}
Given a finitely supported joint PMF $G$ with $n$ variables $x_1,\ldots,x_n$, 
marginalisation can be defined to each subset $x_{i_1},\ldots,x_{i_k}$ with $1 \leq i_1 < \ldots < i_k \leq n$. Let $x_{j_1},\ldots,x_{j_{n-k}}$ be an enumeration without repetition 
of the variables in $x_{1},\ldots,x_{n}$ that are not listed in $x_{i_1},\ldots,x_{i_k}$, then  
$G_{(i_1,\ldots,i_{k})}$ represents a joint PMF with $k$ variables $x_{i_1},\ldots,x_{i_k}$ as follows:
\[G_{(i_1,\ldots,i_{k})}(x_{i_1},\ldots,x_{i_k}) = \sum_{x_{j_1}\ldots,x_{j_{n-k}}}^\star G(x_1,\ldots,x_n) \]

For a bivariate PMF $G(x,y)$ independence is defined as independence of its two marginalisations. 
\[\mathit{IND}(G) \equiv_{def} \forall x,y \in V. G(x,y) = G_{(1)}(x) \cdot G_{(2)}(y).\]

\subsection{Expectation, (co)variance, and correlation for PMFs}
Now $F(x)$ is assumed to be a term representing a finite support PMF with $x$ as the key variable, while
$G(x,y)$ represents a joint PMF with finite support with $x$ as the first and $y$ as the second key variable. 
Two PMFs $G_{(1)}$ and $G_{(2)}$ are derived from $G$ by marginalization: 
$G_{(1)}(x) = \sum_{y}^{\star}G(x,y)$ and $G_{(2)}(y) = \sum_{x}^{\star}G(x,y)$.
The expected value $E_{\mathit{pmf}}(F)$ of  $F$ and related operations are given in Table~\ref{EVCCpmf}.
\begin{table}
\centering
\hrule
\begin{align}
E_{\mathit{pmf}}(F) &= \sum_{x}^{\star}(x \cdot F(x)) \tag{Expected value of F}\\
\mathit{VAR}_{\mathit{pmf}}(F) &= \sum_{x}^{\star}(x^2 \cdot F(x)) - E_{\mathit{pmf}}(F)^2 \tag{variance of F}\\
\mathit{COV}_{\!\mathit{pmf}}(G) &= \sum_{x,y}^{\star}(x \cdot y \cdot G(x,y)) - E_{\mathit{pmf}}(G_{(1)}) \cdot E_{\mathit{pmf}}(G_{(2)})
	\tag{covariance of G}\\
\mathit{CORR}_{\mathit{pmf}}^{sq}(G) &= 
\frac{\mathit{COV}_{\!\mathit{pmf}}(G)^2 }{ (\mathit{VAR}_{\mathit{pmf}}(G_{(1)}) \cdot \mathit{VAR}_{\mathit{pmf}}(G_{(2)})}
	\tag{correlation of G squared}
\end{align}
\hrule
\caption{expected value, (co)variance, and correlation}
\label{EVCCpmf} 
\end{table}
The square of correlation is included in order not to burden the present exposition
with the equational specification of a square root operator. In the context of meadows the square root can be made total,
and the equationally specified, by writing 
$\sqrt(-x) = -\sqrt(x)$ (see~\cite{BBP2013}).
The completeness result of Theorem~\ref{Compl1} carries over in the presence of the square root function. 

These definitions admit a justification  on the basis of the conventional use of the defined terminology, 
the details which  are worth mentioning.
Given a PMF $F$ with finite support, its support, say $S$, may be viewed as a 
sample space so that in conventional terminology $id_S$, the identity function of type $S \to \reals_0$, 
qualifies as a random variable,
say $X$. The power set of $S$ serves as an event space, say $\events_S$. 
Let the probability function $P$ be generated by  
$P(\{s\}) = F(s)$  for $s \in S$. Now $P(X = x) = F(x)$ and
$E_{\mathit{pmf}}(F) = E_P(X) = \sum_{s \in S} (X(s) \cdot P(X=s) )= \sum_{x}^{\star}(x \cdot F(x))$.

\section{Event spaces and probability functions}
\label{PFsCVs}
From~\cite{BP2016} I will recall  equations for Boolean algebras, (signed) meadows, 
and probability functions.
A Boolean algebra $(B,+,-,\overline{~},1,0)$ may be defined as a system with at 
least two elements such that $\forall x,y,z\in B$ the 
well-known postulates of Boolean algebra are valid.
In order to avoid overlap with the operations of a meadow, 
Boolean algebras are equipped with notation from propositional logic,
thus consider $(B, \vee,\wedge,\neg,\top,\bot)$ and adopt the axioms as presented in 
Table~\ref{Ba}.
In~\cite{Pad83} it was shown that the axioms in Table~\ref{Ba} constitute an equational basis for the 
equational theory of Boolean algebras.
\begin{table}
\centering
\hrule
\begin{align}
\label{eq:1}
(x\vee y)\wedge y&= y\\
\label{eq:2}
(x\wedge y)\vee y&= y\\
\label{eq:3}
x\wedge (y\vee z) &= (y\wedge x)\vee (z\wedge x)\\
\label{eq:4}
x\vee (y\wedge z) &= (y\vee x)\wedge (z\vee x)\\
\label{eq:5}
x\wedge \neg x &= \bot\\
\label{eq:6}
x\vee\neg x&=\top
\end{align}
\hrule
\caption{BA: a self-dual equational basis for Boolean algebras}
\label{Ba}
\end{table}
In the setting of probability  functions the elements of the underlying 
Boolean algebra are referred to as events.
  Events are closed under $-\vee-$ which represents alternative occurrence 
  and $-\wedge-$ which represents simultaneous occurrence.
The term ``value'' will refer to an element of a cancellation meadow, mainly the meadow of reals and the meadow of rationals.
A probability  function from events to the values in a signed meadow.
An expression of sort $E$ is an event expression or an event term, an expression of 
type $V$ is a value expression or equivalently a value term.  
In this paper considerations are limited to structures involving a single name for a probability function only, 
the function symbol $P$, at least in the  1-dimensional case.
Table~\ref{PFBC} provides axioms that determine generally agreed boundary conditions for a probability function. 
Table~\ref{PF} contains the axiom for additivity that is included in the axiomatisation of~\cite{BP2016}.
Together with the axioms for signed meadows and for Boolean algebras we find 
the following set of axioms: $\BA+\Md+\DO+\Sign+\ABS+\PFBC_{\!P}+ \PFA_P$.

\begin{table}
\centering
\hrule
\begin{align}
\label{eq:23}
P(\top) &=1 \\
\label{eq:24}
P(\bot) &=0\\
\label{eq:25}
P(x) & = |P(x)|
\end{align}
\hrule
\caption{PFBC$_P$:  boundary conditions for a probability  function }
\label{PFBC}
\end{table}

\begin{table}
\centering
\hrule
\begin{align}
P(x \vee y) &= P(x)+P(y) - P(x \wedge y)  
\end{align}
\hrule
\caption{PFA$_P$:  addtivity axiom for a named probability  function }
\label{PF}
\end{table}

Table~\ref{CPOs} provides explicit definitions of some useful 
conditional probability operators made total by choosing a value in
case the condition has probability 0.

\begin{table}
\centering
\hrule
\begin{align}
P^0(x\midd y) &= \frac{P(x \wedge y)}{P(y)} \\
P^1(x\midd y) &= P^0(x\midd y) \lhd P(y) \rhd 1 \\
P^s(x\midd y) &= P^0(x\midd y) \lhd P(y) \rhd P(x) 
\end{align}
\hrule
\caption{conditional probability operators}
\label{CPOs}
\end{table}

\subsection{Soundness and completeness of axioms for probability functions}
The reader is assumed to be familiar with the concept of  a probability function, say $\pf$ with name $P$, on an event 
space $\events$, where $\pf$ is supposed to comply with the informal Kolmogorov axioms of probability theory.\footnote{%
I use $\events$ for a specific Boolean algebra/event space in order to indicate that it is a parameter 
on which other parameters such as $\pf$ depend. A difference between $P$ and $\pf$ is made because for one name $P$ different interpretations are considered. Assimilation of $P$ and $\pf$, like with the constant $0$ and the real number $0$ is only 
adequate, in some context, if, relative to the context at hand, 
commitment to the use of a single probability function has been made.}
Being based on
the availability of real numbers, sets, and measures on sets, the Kolmogorov axioms are more easily 
understood as providing a mathematical definition, that is a set of requirements, governing which functions are considered 
probability functions than as 
constituting a formal system of axioms.
The axiom system $\BA+\Md+\DO+\Sign+\ABS+\PFBC_{\!P} + \PFA_P$ may be considered a formalisation of the 
Kolmogorov axioms for 
probability functions. 

A probability function structure over an event space $\events$ is a two sorted structure having 
$E$ (events) and $V$ (values) as sorts with $E$ interpreted by a Boolean algebra, now denoted $\events$,
 and $V$ interpreted as the 
real numbers $\reals$ as chosen in Section~\ref{MVSM}, enriched with a probability function $\pf$ from $E$ to $V$. 
The Kolmogorov axioms specify precisely which functions are probability functions. I will assume that $V$ is the domain of the meadow of reals, i.e. that the meadow version of real numbers is used. With $\EPV(\events,\reals_0(\sg),P)$ the class of probability function structures 
over a fixed event structure $\events$ is denoted, with values taken in $|\reals_0(\sg)|$. 
For a specific PMF $\pf$ the pertinent structure is denoted by
 $\EPV(\events,\reals_0(\sg),\pf)$. $\EPV(\BA,\reals_0(\sg),P)$ denotes the union of all collections 
 $\EPV(\events,\reals_0(\sg),P)$ for all $\events$ with $\events \models \BA$.
It is apparent from the construction  that $\EPV(\events,\reals_0(\sg),\pf) \models \BA+\Md+\DO+\Sign+\ABS+\PFBC_{\!P} + \PFA_P$.
A completeness result for $\BA+\Md+\DO+\Sign+\ABS+\PFBC_{\!P} + \PFA_P$ is taken from~\cite{BP2016}.
\begin{theorem} 
\label{Compl:1}
$\BA+\Md+\DO+\Sign+\ABS+\PFBC_{\!P} + \PFA_P$ is sound and complete for the  
equational theory of $\EPV(\BA,\reals_0(\sg),P)$.
\end{theorem}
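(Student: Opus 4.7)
Proof plan. Soundness is routine: each equation of BA is valid in any Boolean algebra by Padmanabhan's axiomatisation, the equations of Md+Sign are valid in $(\reals_0,\sg)$ by the cited result of~\cite{BBP2015}, and the four equations in Table~\ref{PF} follow directly from the Kolmogorov definition of a probability function (non-negativity gives $P(x)=|P(x)|$, inclusion--exclusion gives the disjunction law, and normalisation gives $P(\top)=1$, $P(\bot)=0$). So the content lies entirely in completeness: if $t=r$ between value terms of the combined signature is valid throughout $EPV(BA,\reals_0(\sg),P)$, show that $BA+Md+Sign+PF_P\vdash t=r$.

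The strategy I would follow is a two-layer reduction to the pre-existing signed-meadow completeness theorem. First, fix the finite set $e_1,\ldots,e_m$ of event variables occurring in $t$ and $r$, and let $A$ denote the finite free Boolean algebra they generate, with atoms $a_1,\ldots,a_N$ (each atom a conjunction of literals in the $e_j$). Using BA, every event subterm $e$ of $t,r$ is provably equal to a disjunction of atoms $\bigvee_{i\in I(e)} a_i$. Using axiom~(\ref{eq:26}) together with $P(\bot)=0$ repeatedly, prove $P(e)=\sum_{i\in I(e)} P(a_i)$. Thus $t$ and $r$ become provably equal to value terms $t',r'$ in which the only $P$-subterms are the symbols $P(a_1),\ldots,P(a_N)$. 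Introduce fresh value variables $y_i$ and let $t'',r''$ be the result of substituting $y_i$ for $P(a_i)$.

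The second layer is to show that the condition $t''=r''$ is provable in $Md+Sign$ augmented with the non-negativity and normalisation constraints $y_i=|y_i|$ and $\sum_i y_i = 1$; by construction these constraints are derivable consequences of $PF_P$ applied to $P(a_1),\ldots,P(a_N)$ (the second using $P(\top)=1$ and pairwise disjointness of the atoms). Every assignment of non-negative reals $y_1,\ldots,y_N$ with $\sum y_i=1$ is realised by some $\pf\in EPV(\events_A,\reals_0(\sg),P)$ on the fixed Boolean algebra $A$, simply by extending $\pf(a_i):=y_i$ additively. Hence the semantic validity of $t=r$ in $EPV(BA,\reals_0(\sg),P)$ transfers to the validity of $t''=r''$ in $(\reals_0,\sg)$ under those constraints, and then Theorem~\ref{Compl:1}'s predecessor (the completeness of $Ms+Sign$ for $(\reals_0,\sg)$) yields derivability; pulling the substitution back gives the required proof.

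The main obstacle is the interaction between the non-negativity axiom $P(x)=|P(x)|$ and the signed-meadow calculus: one must verify that the conditional constraints $y_i=|y_i|$ and $\sum y_i=1$ can be eliminated inside a purely equational derivation, which is where the strength of the sign-function axiomatisation of~\cite{BBP2015} is really used (conditions of the form $y\geq 0$ can be absorbed into equations via $\sg$). A secondary subtlety is making sure the passage to atoms of the free Boolean algebra on $e_1,\ldots,e_m$ goes through purely equationally in BA, but this is handled by Padmanabhan's basis together with the standard disjunctive normal form argument.
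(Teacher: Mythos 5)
The paper does not actually prove this theorem: it is imported verbatim from~\cite{BP2016}, and the text surrounding the statement only records soundness (immediate from the construction of $EPV(\events,\reals_0(\sg),\pf)$) and a corollary about finite event structures. So there is nothing in this paper to compare your argument against line by line. That said, your two-layer reduction --- pass to the atoms of the free Boolean algebra on the occurring event variables, replace $P(a_i)$ by fresh value variables constrained by $y_i=|y_i|$ and $\sum_i y_i=1$, observe that every such assignment is realised by an actual probability function on that algebra, and then invoke the completeness of Md+Sign --- is the standard strategy and, to the extent the cited source can be reconstructed, essentially the one used there; your sketch is correct in outline. The one step you rightly flag as carrying the real weight is the discharge of the side constraints, and it is worth noting that this is exactly why the paper's Theorem~1 is stated for \emph{conditional} equations: $\bigl(\bigwedge_i y_i=|y_i| \wedge \sum_i y_i=1\bigr)\to t''=r''$ is a conditional equation valid in $(\reals_0,\sg)$, hence derivable, and its hypotheses are provable instances of PF$_P$ after substituting $P(a_i)$ back for $y_i$.
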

It is a corollary of the completeness proof in~\cite{BP2016} that the same axioms are complete for the 
class $\EPV(\BA^f,\reals_0(\sg),P)$
containing those probability function structures  which are expansions of a finite event structure.
In~\cite{Halpern1990} first order axioms are provided for probability calculus, and corresponding completeness is shown 
making use of the completeness result for the first order theory of real numbers, a fact which also underlies the result in~\cite{BP2016}.

\subsection{$\mathsf{BR }$ and $\mathsf{BRs}$, two forms of Bayes' rule}
As a comment to the specification of probability functions an excursion to Bayes' rule is worthwhile.   
First consider the following equation:
\begin{equation} 
	%\label{EQ1}
	P(x \wedge y)\cdot P(y)\cdot P(y)^{-1} = P(x \wedge y) \tag{$\mathsf{EQ1}$}
\end{equation}
 Equation~$\mathsf{EQ1}$ follows from 
$\BA+\Md+\DO+\Sign+\ABS+\PFBC_{\!P} + \PFA_P$. 
This fact is a consequence of Theorem~\ref{Compl:1} above. A direct proof reads as follows.

$\displaystyle \phi(u,v) \equiv 0(|u| + |v|) \cdot u.$ Now $(\reals_0,\sg)\models \phi(u,v) = 0$, and
using the completeness theorem of~\cite{BBP2015} one obtains that $\BA+\Md+Sign \vdash \phi(u,v) = 0$. Substituting 
$P(y \wedge x)$ for $u$ and $P(y \wedge \neg x)$ for $v$ one derives: 
	$\displaystyle  \vdash 0 = \phi(P(y \wedge x),P(y \wedge \neg x)) =
	0(|P(y \wedge x)| + |P(y \wedge \neg x)|) \cdot P(y \wedge x) =
	0(P(y \wedge x) + P(y \wedge \neg x)) \cdot P(y \wedge x) =
	 0(P(y)) \cdot P(y \wedge x)$,
	from which the required result follows by expanding $0(P(y))$.

Bayes' rule, also known as Bayes' theorem,  occurs in different forms. 
The conditional operator $P^0$ of Table~\ref{CPOs} is used for its presentation below. 
The simplest form of Bayes' rule, is an equation here referred to as $\mathsf{BR }$:
\begin{equation} P^0(x \midd y) = \frac{P^0(y \midd x) \cdot P(x)}{P(y)} \tag{$\mathsf{BR}$}\end{equation}
In~\cite{BP2016} it is shown that $\mathsf{BR}$ follows from the specification $\BA+\Md+\DO+\Sign+\ABS+\PFBC_{\!P}+\mathsf{EQ1}$. 
 As it turns out $\mathsf{BR}$ implies equation~$\mathsf{EQ1}$.
This fact  is shown as follows: by substituting $x \wedge y$ for $y$ in $\mathsf{BR}$
one obtains: $P^0(x|x \wedge y) = (P^0(x \wedge y|x) \cdot P(x))/P(x \wedge y)$. 
Multiplying both sides with $P(x \wedge y)$ gives $L = R$ with $ L = P^0(x|x \wedge y)\cdot  P(x \wedge y)$ and 
$R= ((P^0(x \wedge y|x) \cdot P(x))/P(x \wedge y))\cdot P(x \wedge y)$. Now 
$L = (P(x \wedge (x \wedge y))/P(x \wedge y)) \cdot P(x \wedge y) = (P(x \wedge y) \cdot P(x \wedge y)) / P(x \wedge y) = P(x \wedge y)$, and $R = (((P((x \wedge y) \wedge x)/P(x)) \cdot P(x))/P(x \wedge y))\cdot P(x \wedge y) = 
(P(x \wedge y)/P(x \wedge y))\cdot P(x \wedge y) \cdot (P(x)/ P(x)) = P(x \wedge y) \cdot P(x) \cdot P(x)^{-1}.$

\begin{proposition} The axiom system $\BA+\Md+\DO+\Sign+\ABS+\PFBC_{\!P} +\mathsf{EQ1}$ is
strictly weaker than $\BA+\Md+\DO+\ABS+\Sign+\PFBC_{\!P} + \PFA_P$. 
\end{proposition}
\begin{proof}
Consider a four element 
event space generated by an atomic event 
$e$ and choose $\pf$ as follows: $\pf(\bot) = \pf(e) = \pf(\neg e) = 0$ and $\pf(\top) = 1$. The equations of 
$\PFBC_{\!P}$ and $\mathsf{EQ1}$ are satisfied  while $\PFA_P$ is not satisfied.
\end{proof}
This weakness persists if~$\mathsf{EQ1}$is replaced by $\mathsf{BR}$.
A second and equally well-known form of Bayes' rule is $\mathsf{BRs}$ from Table~\ref{PFA2}.
$\mathsf{BR}$ follows from $\BA+\PFBC_{\!P}+ \mathsf{BRs}$ by taking $z = \top$. 

\begin{proposition}
$\BA+\PFBC_{\!P}+\mathsf{BRs}$  implies $\PFA_P$.
\end{proposition}
\begin{proof}  It suffices to derive the following equation~\ref{EQ2}
	\begin{equation} 
	\label{EQ2} 
	P(y) = P(y \wedge z) + P(y \wedge \neg z) \tag{$\mathsf{EQ2}$}
	\end{equation}
This suffices because, according to~\cite{BP2016}, it is the case that $\mathsf{EQ2}$ in combination with $\BA+\Md+\DO+\Sign+\ABS+\PFBC_{\!P}$ entails $PFA_P$.
To this end set $x= y$ in $\mathsf{BR}_2$, thereby obtaining $P^0(y |y) = (P^0(y|y) \cdot P(y))/(P^0(y|z)\cdot P(z) + P^0(y| \neg z)\cdot P(\neg z)).$ 

 To derive equation ~$\mathsf{EQ2}$, notice $P^0(y |y) = P(y \wedge y)/P(y) = P(y)/P(y)$,  take the inverse at both sides thus obtaining 
 $L = R$ with $L=P(y)/P(y)$ and $R= (P^0(y|z)\cdot P(z) + P^0(y| \neg z)\cdot P(\neg z))/P(y)$. 
 Then multiplying $L$ and $R$ with $P(y)$ yields $L \cdot P(y) = R \cdot P(y)$. Now
 $L \cdot P(y) = (P(y)/P(y)) \cdot P(y) = P(y)$ and 
 $R \cdot P(y) = ((P^0(y|z)\cdot P(z) + P^0(y| \neg z)\cdot P(\neg z))/P(y))\cdot P(y)=\\
 ((P(y \wedge z)/P(z))\cdot P(z) + (P(y \wedge \neg z)/P(\neg z))\cdot P(\neg z))\cdot (P(y)/P(y))=\\
  (P(y \wedge z) + P(y \wedge \neg z)) \cdot (P(y)/P(y)) =
 P(y \wedge z) \cdot (P(y)/P(y)) + P(y \wedge \neg z) \cdot (P(y)/P(y)) =\\
  P(y \wedge z) + P(y \wedge \neg z)$.
\end{proof}
  It may be concluded that Table~\ref{PFA2} provides an adequate substitute of $\PFA_P$. 
   This observation suggests an alternative axiomatisation $\BA+\Md+\DO+\Sign+\ABS+\PFBC_{\!P}+\PFA^\prime_P$ 
   based on $\mathsf{BRs}$ as given in Table~\ref{PFA2}.
  
  For $\mathsf{BR}$, however, there seems to be no role as an axiom in the axiomatic framework of this paper. 
  For instance one may wonder if $\mathsf{BR}$
  provides an implicit definition of conditional probability.
\begin{proposition} It is not the case that in the presence of $\BA+\Md+\DO+\Sign+\ABS+\PFBC_{\!P}+\PFA_{P}$, 
though in the absence of the definitions of Table~\ref{CPOs},  $\mathsf{BR}$ serves as an implicit definition of $P^0$.
\end{proposition}
\begin{proof} Let $\displaystyle Q(x,y) =1(P(y)) \cdot P(x)$. Then $Q(x,y)$ differs from $P^0(x\midd y)$ in all but exceptional cases. 
However, $Q(-,-)$ satisfies $\mathsf{BR}$ considered as  a requirement on $P^0(- \midd -)$: $\displaystyle \frac{Q(y,x) \cdot P(x)}{P(y)}= 
\frac{1(P(x)) \cdot P(y)\cdot P(x)}{P(y)}=1(P(y)) \cdot 1(P(x)) \cdot  P(x) =1(P(y)) \cdot P(x) = Q(x,y)$. 
\end{proof}
 
 \begin{table}
\centering
\hrule
\begin{align}
P^0(x\midd y) &= \frac{P^0(y\midd x) \cdot P(x)}{P^0(y\midd z)\cdot P(z) + P^0(y\midd  \neg z)\cdot P(\neg z)} \tag{$\mathsf{BRs}$}
\end{align}
\hrule
\caption{PFA$^\prime_P$:  alternative axiom for additivity}
\label{PFA2}
\end{table}

\begin{table}
\centering
\hrule
\begin{align}
	v(- x) &= -v(x) \\
	v(x^{-1}) &= v(x)^{-1} \\
	v(x+y) &= v(x) + v(y)\\
	v(x \cdot y) &= v(x) \cdot v(y)\\
	v(\sg(x)) &= \sg(v(x))
\end{align}
\hrule
\caption{$\UCV$:  axioms for unconditional values; $x,y$ range over $V$.}
\label{UCV} 
\end{table}

 \subsection{A signed vector space meadow of conditional values}
 \label{MofCV}
A third sort named $V_C$ containing so-called conditional values will be introduced. 
$V_C$ is generated by an embedding $v\colon V \to V_C$ and
a conditional operator $ -\! : \to -\colon E \times V_C \to V_C$. $V_C$ is equipped with all meadow operations while $v(0)$ 
serves as 0 and $v(1)$  serves as 1. A specification is given by combining (i) the axioms
$\UCV$ of Table~\ref{UCV} with (ii) $\Md_{cv} =\Md_{[v(0)/0,v(1)/1]}$, i.e the equations of Table~\ref{Md}, 
however with variables $X,Y,Z$ now ranging over $V_C$, 
and with $v(0)$ substituted for $0$ and $v(1)$ substituted for $1$, (iii) $\Sign_{cv}$, the equations of Table~\ref{t:a}, but
now with its variables ranging over $V_C$ and writing $v(0)$ for $0$ and $v(1)$ for $1$,
and (iv) the specification $\Cond$
of the conditional operator  $ -\! : \to -\colon E \times V_C \to V_C$ as specified in Table~\ref{CondEqs}.

Given a Boolean algebra $\events$ and a signed meadow $\mead(\sg)$ there is a three sorted algebra 
$\UCV(\events,\mead(\sg),\cv(\events,\mead(\sg)))$
with  the domain $\cv(\events,\mead(\sg))$ for sort $V_C$  freely generated from $\events$ and $\mead(\sg)$, of 
which  includes a sort $V_C$, the conditional operator on $E \times V_C$, 
 and the embedding $v$  from $V$ into $V_C$. 

For a Boolean algebra $\events$ the subset $\events_{\mathit{at}}$  consists of the atomic elements of $|\events|$,
where $a \in |\events|$
 is atomic if $a \neq \bot$ and whenever for $b$ and $c$ in $|\events|$, 
 $\events \models (\neg b \vee a) \wedge (\neg c \vee a) =\top $ then  $\events \models \neg b \vee a = \top$ or 
 $\events \models \wedge \neg c \vee a = \top$. $\events_{\mathit{at}}$ contains the maximally consistent elements of the Boolean algebra.

\begin{table}
\centering
\hrule
\begin{align}
%\label{eq:XX}
\top:\to X &=X\\
\bot:\to X &=v(0)\\
%e\!: \to 0 &= 0 \\
 e\!: \to (X+Y) &= (e\!: \to X) + (e\!: \to Y) \\
 \label{dist:mult}
e\!: \to (X \cdot Y) &= (e\!: \to X) \cdot Y \\
e\!: \to (-X) &= -(e\!: \to X)  \\
e\!: \to (X^{-1}) &= (e\!: \to X)^{-1}  \\
(e \vee f\!: \to X)   &=( e\! : \to X) +  (f\!: \to X) - (e \wedge f\!: \to X)\\
e \wedge f\!: \to X &= e \!: \to( f\!: \to X)\\
\sg(e\!: \to X) &=e\!: \to \sg(X)
\end{align}
\hrule
\caption{$\Cond$: axioms for the conditional operator}
\label{CondEqs}
\end{table}
 
To each closed term $X$ of type $V_C$ of the extended 
signature a mapping $\llbracket X \rrbracket: \events_{\mathit{at}} \to V$ is assigned, with the rules of  Table~\ref{FunCV}.
The equivalence relation $\equiv_{at}$ on closed $V_C$ terms is given by 
$X \equiv_{at} Y \iff \forall a \in \events_{\mathit{at}} (\llbracket X \rrbracket(a) = \llbracket Y \rrbracket(a))$.
$\equiv_{at}$ is a congruence relation  which meets all requirements imposed by 
$\UCV+ \Sign_{cv} + \Md_{cv}+ \Cond$
and $\cv(\events,\mead)$  can be defined as the free term algebra 
for sort $ V_C$ in the extended signature modulo $\equiv_{at}$. This construction guarantees the consistency of the given 
construction of 
the structure for $V_C$
as for arbitrary $a \in \events_{\mathit{at}}$: $\llbracket v(0) \rrbracket(a) = 0 \neq 1 =  \llbracket v(1) \rrbracket(a)$.

\begin{proposition} 
\label{Noncancellation}
If $\mead$ is nontrivial (that is $\mead \not\models 0 = 1$) and $|\events|$ has 
more than two elements then $\cv(\events,\mead)$ is not a cancellation meadow 
(that is $\cv(\events,\mead) \not\models X \neq 0 \to X \cdot X^{-1} = 1$).
\end{proposition}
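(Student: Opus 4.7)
The plan is to produce a model of Md$_{cv}$ + Cond in which cancellation visibly fails for a concrete term, then exploit the freeness of $\cv(\events,\mead)$ to lift the failure back. Because $||\events||$ has more than two elements, fix $e \in ||\events||$ with $e \neq \top$ and $e \neq \bot$; take as witness $X := e\!:\to v(1)$. The target will be the three-sorted structure whose event sort is $\events$, whose value sort is $\mead$, and whose CV sort is the componentwise meadow $\mead \times \mead$.

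To specify the target's operations, invoke the ultrafilter lemma twice: since $e \neq \bot$, some ultrafilter contains $e$, yielding a Boolean character $h_1 : \events \to \{0,1\} \subseteq \mead$ with $h_1(e)=1$; dually $\neg e \neq \bot$ produces $h_2$ with $h_2(e)=0$. Interpret $v$ as the diagonal embedding $a \mapsto (a,a)$ and interpret the conditional by $f \!:\to (Y_1,Y_2) \mapsto (h_1(f)\cdot Y_1,\, h_2(f)\cdot Y_2)$. I would then verify axiom by axiom that this is a model: the Md$_{cv}$ equations are immediate from $\mead \times \mead$ being a componentwise meadow, with injectivity of the diagonal securing equation~\eqref{vnontriv}; the Cond equations reduce to the identities $h_i(\top)=1$, $h_i(\bot)=0$, $h_i(e\wedge f)=h_i(e)h_i(f)$, $h_i(e\vee f)=h_i(e)+h_i(f)-h_i(e)h_i(f)$, and the idempotency $h_i(e)^2=h_i(e)$ that underwrites equation~\eqref{dist:mult}; the compatibility $e\!:\to X^{-1}=(e\!:\to X)^{-1}$ additionally uses $0^{-1}=0$ and $1^{-1}=1$ in $\mead$.

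By freeness of $\cv(\events,\mead)$ the assignment extends uniquely to a homomorphism $\phi$ from $\cv(\events,\mead)$ into this target. Evaluating, $\phi(X) = (h_1(e),h_2(e)) = (1,0)$, and since inversion in $\mead \times \mead$ is componentwise with $0^{-1}=0$, one computes $\phi(X\cdot X^{-1}) = (1,0)\cdot(1,0) = (1,0)$. Because $\mead$ is nontrivial, $0 \neq 1$ in $\mead$, hence $(1,0) \neq (0,0) = \phi(v(0))$ and $(1,0) \neq (1,1) = \phi(v(1))$. Consequently $X \neq v(0)$ and $X \cdot X^{-1} \neq v(1)$ already in $\cv(\events,\mead)$, witnessing the failure of cancellation.

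The one substantive step is the axiom check in the second paragraph; within it the pivotal identity is equation~\eqref{dist:mult}, which survives in the target precisely because $h_i$ lands in $\{0,1\}$ and these are idempotent under multiplication in $\mead$. The rest is bookkeeping once the two ultrafilters separating $e$ from $\top$ and $\bot$ are in hand.
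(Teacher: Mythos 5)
Your proof is correct and is essentially the paper's argument in slightly different packaging: the paper also takes the witness $e\!:\to v(1)$ and evaluates CV terms at points of the Stone space of $\events$ (via the rules of Table~\ref{FunCV}), which is exactly your interpretation $f\!:\to Y \mapsto h(f)\cdot Y$ through a Boolean character; the paper uses the full Stone representation and quotients the term algebra by pointwise equality, whereas you use just two ultrafilters separating $e$ from $\top$ and $\bot$ and map the free algebra into $\mead\times\mead$. The only difference of substance is that you make explicit the axiom-by-axiom verification that the two-point evaluation is a model of Md$_{cv}$ + Cond (in particular the idempotency underlying equation~\eqref{dist:mult}), which the paper asserts without detail.
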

\begin{proof}  
The proof works by finding an $X$ which differs from $v(0)$ modulo $\equiv_{at}$
and so that $X \cdot X^{-1}$ differs from $v(1)$ modulo $\equiv_{at}$. 
Indeed If $|\events| > 2$ then $\events_{\mathit{at}}$ is non-empty, and let $a$ be an atom. 
Now $ a\!:\to v(1)$,  violates IL. First notice that $\llbracket \bot\!:\to v(1) \rrbracket (a) = 0 \neq 
1 = \llbracket  a\!:\to v(1) \rrbracket (a)$ so that $\bot\!:\to v(1) \not\equiv_{at} a\!:\to v(1)$, and similarly
by application to $\neg a$ that 
$a\!:\to v(1) \not\equiv_{at} \top \!:\to v(1)$. Now $(a\!:\to v(1))^{-1} = a\!:\to v(1)^{-1} = a\!:\to v(1^{-1}) = 
e\!:\to v(1)$ whence
$(a\!:\to v(1)) \cdot (a\!:\to v(1))^{-1} = (a\!:\to v(1)) \cdot (a\!:\to v(1))=
a\!:\to v(1) \not\equiv_{at} \top \!:\to v(1) (\not \equiv_{at} a\!:\to v(1))$.
\end{proof}

\begin{definition}
An expression $X = e_1:\to v(t_1)+\ldots e_n:\to v(t_n)$ of type $V_C$ is a flat $V_C$ expression.
\end{definition}
\begin{definition}
A flat $V_C$ expression $X = e_1:\to v(t_1)+\ldots e_n:\to v(t_n)$ is non-overlapping   
if for all $1 \leq i,j\leq n$ with $ i \neq j$, it is the case that provably $e_i \wedge e_j = \bot.$
\end{definition}
\begin{definition}
Two non-overlapping  flat $V_C$ expressions are similar if both involve the same collection of conditions, used in the same order.
\end{definition}
\begin{proposition} For each closed $V_C$ expression $X$ there is a non-overlapping flat $V_C$ expression $Y$  such that 
 $\Md+\DO+\Sign+\ABS+\UCV + \Sign_{cv} + \Md_{cv}+\Cond \vdash X=Y$.
\end{proposition}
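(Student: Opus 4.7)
My plan is structural induction on the closed CV term $X$. The base case $X=v(t)$ is handled by $Y=\top\!:\to v(t)$, which is trivially nonoverlapping and equal to $v(t)$ by the first equation of Table~\ref{CondEqs}. In each inductive step I apply the induction hypothesis to replace the immediate subterms by nonoverlapping flat expressions $\sum_i e_i\!:\to v(s_i)$ and $\sum_j f_j\!:\to v(t_j)$, and then treat the top-level constructor. Negation is routine: $-\sum_i e_i\!:\to v(s_i)=\sum_i e_i\!:\to v(-s_i)$, using $e\!:\to(-X)=-(e\!:\to X)$ and $v(-x)=-v(x)$. An outer conditional reduces to $e\!:\to\sum_i f_i\!:\to v(s_i)=\sum_i(e\wedge f_i)\!:\to v(s_i)$ by additivity of $e\!:\to$ and the composition axiom $e\wedge f\!:\to X=e\!:\to(f\!:\to X)$. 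A product distributes and is collapsed with the derived identity
\[(e\!:\to v(s))\cdot(f\!:\to v(t))=(e\wedge f)\!:\to v(st),\]
which follows from the simpler form of equation~(\ref{dist:mult}) noted in the footnote to Table~\ref{CondEqs}, together with $v(s)v(t)=v(st)$ and the composition axiom. In all three of these subcases pairwise disjointness of the input event families is inherited by the output.

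Only the sum needs extra work, since the concatenation of two nonoverlapping flat expressions is flat but may overlap. I resolve this by passing to the atoms $a_1,\dots,a_m$ of the finite Boolean subalgebra of $\events$ generated by all events appearing in $X$, rewriting each $e_i\!:\to v(s_i)$ as $\sum_{a_k\leq e_i}a_k\!:\to v(s_i)$ through iterated application of the $\vee$-axiom of Table~\ref{CondEqs} (valid under disjointness of atoms, since $\bot\!:\to X=v(0)$), and collecting summands sharing the same atom via $a_k\!:\to v(r)+a_k\!:\to v(r')=a_k\!:\to v(r+r')$. The result is indexed by pairwise disjoint atoms and is therefore a nonoverlapping flat expression.

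The genuine content of the proposition lies in the inverse case $X=X_1^{-1}$. By induction $X_1=\sum_i e_i\!:\to v(t_i)$ is nonoverlapping, and I claim $X_1^{-1}=\sum_i e_i\!:\to v(t_i^{-1})$. Since $(e_i\!:\to v(t_i))^{-1}=e_i\!:\to v(t_i)^{-1}=e_i\!:\to v(t_i^{-1})$ by the conditional inverse axiom and $v(x)^{-1}=v(x^{-1})$, the claim reduces to the auxiliary lemma
\[AB=0\;\Longrightarrow\;(A+B)^{-1}=A^{-1}+B^{-1},\]
valid in every meadow, which can then be iterated across the summands; nonoverlap supplies the hypothesis because the product identity above forces $(e_i\!:\to v(t_i))(e_j\!:\to v(t_j))=\bot\!:\to v(t_it_j)=v(0)=0$ whenever $i\ne j$. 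This auxiliary lemma is the principal obstacle.

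To establish the lemma I would first derive from $AB=0$, using the identity $x^{-1}=x^{-1}\cdot(xx^{-1})$ (which is axiom~\ref{eq:16} applied to $x^{-1}$ via $(x^{-1})^{-1}=x$), that $AB^{-1}=A^{-1}B=A^{-1}B^{-1}=0$; for example $AB^{-1}=A\cdot(1_B\cdot B^{-1})=(A\cdot 1_B)\cdot B^{-1}=((AB)\cdot B^{-1})\cdot B^{-1}=0$. Direct expansion then yields $(A+B)(A^{-1}+B^{-1})=1_A+1_B$, $(A+B)(1_A+1_B)=A+B$, and $(A^{-1}+B^{-1})(1_A+1_B)=A^{-1}+B^{-1}$, so that $A^{-1}+B^{-1}$ satisfies the pseudoinverse relations $(A+B)y(A+B)=A+B$ and $y(A+B)y=y$ that $(A+B)^{-1}$ satisfies by axioms~\ref{eq:15} and~\ref{eq:16}. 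Uniqueness of such a pseudoinverse in any cancellation meadow is immediate by case split on whether $A+B=0$, and the subdirect representation of meadows as subalgebras of products of cancellation meadows propagates uniqueness to every meadow, in particular to $\cv(\events,\mead)$; hence $A^{-1}+B^{-1}=(A+B)^{-1}$, completing the inverse case and the induction.
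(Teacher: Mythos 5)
The paper states this proposition without proof, so there is nothing to compare your argument against; it has to stand on its own, and for the most part it does. The structural induction is the right skeleton, the atomization of the Boolean subalgebra generated by the occurring events is exactly what the sum case needs, and you correctly isolate the inverse case as the only step with real content. (Two side remarks: the axiom list in the statement must implicitly include BA and Md$_{cv}$, since your uses of atoms, of $\bot\!:\to X = v(0)$, and of ring manipulations on sort CV all require them; and your product identity leans on the simplified form of equation~\ref{dist:mult}, which the footnote to Table~\ref{CondEqs} does license.)

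The one genuine soft spot is the final step of your inverse lemma. The proposition asks for a derivation, $Md+Sign+Cond \vdash X=Y$, but your argument ends by establishing that the quasi-identity $AB=0\to(A+B)^{-1}=A^{-1}+B^{-1}$ is \emph{valid} in $\cv(\events,\mead)$ --- truth in the intended model rather than provability from the axioms --- and the detour through cancellation meadows and the subdirect representation theorem is in any case heavier than needed. Both defects are cured at once by finishing the pseudoinverse argument inside an arbitrary commutative ring: writing $z=A+B$ and $y=A^{-1}+B^{-1}$, your computations give $zyz=z$ and $yzy=y$, while axioms~\ref{eq:15} and~\ref{eq:16} give $zz^{-1}z=z$ and $z^{-1}zz^{-1}=z^{-1}$; then $zy=(zz^{-1})(zy)$ and $zz^{-1}=(zy)(zz^{-1})$, so $zy=zz^{-1}$ by commutativity, and $y=y(zy)=y(zz^{-1})=(zy)z^{-1}=(zz^{-1})z^{-1}=z^{-1}$. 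Every step is an equational consequence of the hypothesis $AB=0$ together with Md, so the lemma --- and with it the whole induction --- yields an actual derivation in conditional equational logic, as the statement demands. (Alternatively you could keep the semantic route, but you would then have to invoke completeness of conditional equational logic for the variety of meadows to convert validity into derivability, a step your write-up currently omits.)
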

\begin{proposition} For closed $V_C$ expressions $X$ and $Y$ 
similar non-overlapping flat expressions $X^\prime$ and 
 $Y^\prime$ can be found so that $\Md+\DO+\Sign+\ABS+\UCV + \Sign_{cv} + \Md_{cv}+\Cond \vdash X=X^\prime \, \&\, Y=Y^\prime.$
\end{proposition}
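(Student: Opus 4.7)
First, I would apply the previous proposition to both $X$ and $Y$ to obtain nonoverlapping flat CV expressions
\[
X_0 \;=\; \sum_{i=1}^m e_i\!:\to v(t_i), \qquad Y_0 \;=\; \sum_{j=1}^n f_j\!:\to v(s_j),
\]
in which the $e_i$ are pairwise disjoint (provably $e_i \wedge e_{i'} = \bot$ for $i \neq i'$) and similarly for the $f_j$. Since $Md+Sign+Cond \vdash X = X_0$ and $Y = Y_0$, it suffices to rewrite $X_0$ and $Y_0$ as similar nonoverlapping flat expressions over a common set of conditions built from both.

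The main step is a common refinement. Let $g_1,\ldots,g_N$ be the atoms of the finite Boolean subalgebra generated by $\{e_1,\ldots,e_m,f_1,\ldots,f_n\}$; these atoms are provably pairwise disjoint, and each $e_i$ is provably equal to a disjunction $\bigvee_{k \in A_i} g_k$ for a uniquely determined $A_i \subseteq \{1,\ldots,N\}$, and likewise each $f_j$ equals $\bigvee_{k \in B_j} g_k$. For disjoint events $e,f$, the axiom $(e \vee f)\!:\to X = (e\!:\to X) + (f\!:\to X) - (e \wedge f\!:\to X)$ combined with $\bot\!:\to X = v(0)$ and $X+v(0) = X$ yields the derived identity $(e \vee f)\!:\to X = (e\!:\to X) + (f\!:\to X)$. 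Iterating this on the atomic decomposition of $e_i$ gives
\[
e_i\!:\to v(t_i) \;=\; \sum_{k \in A_i} g_k\!:\to v(t_i),
\]
and analogously for $f_j\!:\to v(s_j)$. Substituting into $X_0$ and $Y_0$ produces nonoverlapping flat expressions whose conditions are now subsets of $\{g_1,\ldots,g_N\}$.

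To force both expressions to use exactly the same collection of conditions, I would pad with zero-valued summands. The identity $g\!:\to v(0) = v(0)$ is derivable: distributivity of $\!:\to$ over $+$ gives $g\!:\to v(0) = g\!:\to(v(0)+v(0)) = (g\!:\to v(0)) + (g\!:\to v(0))$, and adding $-(g\!:\to v(0))$ on both sides forces $g\!:\to v(0) = v(0)$. Hence adjoining a summand $g_k\!:\to v(0)$ for each atom not already appearing changes nothing modulo provability. Carrying this out on both sides produces
\[
X' \;=\; \sum_{k=1}^N g_k\!:\to v(u_k), \qquad Y' \;=\; \sum_{k=1}^N g_k\!:\to v(w_k),
\]
with $Md+Sign+Cond \vdash X=X'$ and $Md+Sign+Cond \vdash Y=Y'$; these are similar nonoverlapping flat CV expressions as required.

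The main obstacle is essentially bookkeeping rather than algebra: one must ensure that the atomic decomposition of each $e_i$ and $f_j$, and the pairwise disjointness of the atoms $g_k$, are derivable in the ambient equational theory of Boolean algebras so that each rewriting step genuinely lifts to a derivation in $Md+Sign+Cond$ with BA in the background. Once the finite Boolean subalgebra generated by the conditions is fixed, this lifting is routine, but a careful write-up must track the use of BA at the Boolean layer alongside Md, Sign and Cond at the value and CV layers.
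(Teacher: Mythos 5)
The paper states this proposition (and the preceding flattening proposition it relies on) without any proof, so there is nothing to compare against; your argument — refining all conditions to the atoms of the Boolean subalgebra they generate, splitting each summand via the derived identity $(e\vee f)\!:\to X=(e\!:\to X)+(f\!:\to X)$ for provably disjoint $e,f$, and padding with the derivable $g\!:\to v(0)=v(0)$ — is correct and is evidently the intended construction. Your closing caveat is also well taken: the derivation genuinely needs BA and Md$_{cv}$ in addition to the axioms listed in the statement, which appears to be an omission in the paper's formulation.
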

\begin{proposition}
If we fix $\events$ as some finite minimal event space with $\events \models \top \neq \bot$,
then the $V_C$ expressions generated from $\events$ and $\reals_0$ constitute a signed vector meadow meadow
with dimension $\#(\events_{\mathit{at}})$. If $\#(\events_{\mathit{at}}) \geq 2$ then the meadow of 
conditional values is not a cancellation meadow. Instead it is a vector space meadow (see Paragraph~\ref{SVSM}). 
Elements of the form
$e\!:\to 1$ with $e \in \mathbb{E}$, are the idempotent elements of $V_C$. CVs $e\!:\to 1$ and $f\!:\to 1$ are orthogonal
if and only if $e \wedge f = \bot$ in $\mathbb{E}$. If $a_1,\ldots a_n$ enumerates $\events_{\mathit{at}}$ without repetition
 then $V_C \cong \mathbb{R}_0(\sg)\langle a_1,\ldots,a_n \rangle $.
\end{proposition}
\begin{proposition} Given closed $V_C$ expressions in flat form $X=\sum_{i=1}^n e_i\!:\to v(t_i)$ and
 $Y = \sum_{j=1}^m f_j\!:\to v(r_j)$, 
a flat form representation for $X \cdot Y$ is:
$ \sum_{i=1}^n \sum_{j=1}^m(e_i \wedge f_j)\!:\to v(t_i \cdot r_j).$ 
Moreover, if $X$ and $Y$ are non-overlapping then so is the 
given expression for $X \cdot Y$.
\end{proposition}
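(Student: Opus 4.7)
The plan is to reduce the statement to the single binary identity
\[
(e\!:\!\to v(a)) \cdot (f\!:\!\to v(b)) \;=\; (e \wedge f)\!:\!\to v(a \cdot b),
\]
and then lift it to the full product by the meadow distributivity of $\cdot$ over $+$ in Md$_{cv}$ together with associativity and commutativity of $+$. Expanding the product of the two given sums yields $\sum_{i=1}^n\sum_{j=1}^m (e_i\!:\!\to v(t_i))\cdot(f_j\!:\!\to v(r_j))$, and the binary identity applied termwise produces exactly the flat form displayed in the statement.

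For the binary identity my first attempt exploits the simplified reformulation of dist:mult noted in the footnote to Table~\ref{CondEqs}, namely $e\!:\!\to(X\cdot Y) = (e\!:\!\to X)\cdot Y$. Using commutativity of $\cdot$ and applying this rewrite twice,
\[
(e\!:\!\to v(a))\cdot (f\!:\!\to v(b)) \;=\; f\!:\!\to\bigl(v(b)\cdot(e\!:\!\to v(a))\bigr) \;=\; f\!:\!\to\bigl(e\!:\!\to(v(b)\cdot v(a))\bigr).
\]
The $v$-homomorphism axiom $v(x\cdot y)=v(x)\cdot v(y)$, together with the iterated-conditional axiom $e\wedge f\!:\!\to X = e\!:\!\to(f\!:\!\to X)$ and commutativity of $\wedge$, then deliver the claim in the form $(f\wedge e)\!:\!\to v(a\cdot b) = (e\wedge f)\!:\!\to v(a\cdot b)$.

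For the second assertion, when $X$ and $Y$ are nonoverlapping and $(i,j)\neq(i',j')$, one of $i\neq i'$ or $j\neq j'$ must hold; the first case gives $(e_i\wedge f_j)\wedge(e_{i'}\wedge f_{j'}) \leq e_i\wedge e_{i'} = \bot$, the second gives the symmetric bound by $f_j\wedge f_{j'}=\bot$. The step I expect to be most delicate is discharging the footnote's claim that the simplified form of dist:mult is derivable from the listed axioms. If that derivation proves awkward, the fallback is to decompose $e = (e\wedge f)\vee (e\wedge\neg f)$ and $f = (e\wedge f)\vee(\neg e\wedge f)$ in the Boolean algebra, apply the disjunction axiom to split each conditional into a sum of two, multiply the four cross-products out, and absorb the three mixed terms via an auxiliary lemma stating that $g\wedge h=\bot$ implies $(g\!:\!\to A)(h\!:\!\to B)=v(0)$. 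That lemma in turn is provable by rewriting its left-hand side as $(g\vee\neg g)\!:\!\to((g\!:\!\to A)(h\!:\!\to B))$ via $\top\!:\!\to Z = Z$ and the disjunction axiom, and then collapsing both summands to $\bot\!:\!\to(\cdots)=v(0)$ using dist:mult combined with the iterated-conditional axiom.
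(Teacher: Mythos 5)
The paper states this proposition without proof, so there is no author's argument to compare yours against; judged on its own terms, your proposal is correct. The reduction to the single binary identity $(e\!:\to v(a))\cdot(f\!:\to v(b)) = (e\wedge f)\!:\to v(a\cdot b)$, followed by ring distributivity in Md$_{cv}$ and the disjointness argument $(e_i\wedge f_j)\wedge(e_{i'}\wedge f_{j'}) = (e_i\wedge e_{i'})\wedge(f_j\wedge f_{j'}) = \bot$ for the nonoverlapping claim, is exactly the natural route. You are also right that the delicate point is the footnote's assertion that equation~\ref{dist:mult} can be replaced by $e\!:\to(X\cdot Y) = (e\!:\to X)\cdot Y$: the paper asserts this equivalence but never derives it, so your first route rests on an unproved claim of the paper. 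Your fallback closes that gap, and its auxiliary lemma --- $g\wedge h=\bot$ implies $(g\!:\to A)\cdot(h\!:\to B)=v(0)$ --- is in fact the crux of both routes, since the footnote's simplified form itself reduces to it via $Y - (e\!:\to Y) = \neg e\!:\to Y$ (apply the disjunction axiom to $\top = e\vee\neg e$). Your derivation of that lemma from $\top\!:\to Z = Z$, the disjunction axiom, equation~\ref{dist:mult}, the iterated-conditional axiom, and the meadow identity $Z\cdot v(0)=v(0)$ is sound, so the proof is complete without appeal to the unproved footnote.
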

\begin{table}
\centering
\hrule
\begin{align*}
\llbracket v(m) \rrbracket(a) & = m\\
\llbracket -t \rrbracket(a) & = -(\llbracket t \rrbracket(a))\\
\llbracket t^{-1} \rrbracket(a) & = (\llbracket t \rrbracket(a))^{-1}\\
\llbracket t+r \rrbracket (a)& = \llbracket t \rrbracket(a)+\llbracket r\rrbracket(a)\\
\llbracket t \cdot r \rrbracket(a) & = \llbracket t \rrbracket(a) \cdot \llbracket r\rrbracket(a)\\
\llbracket e\colon \!\!\to t \rrbracket(a) & = \llbracket t \rrbracket(a), \mathit{if}\, \events \models \neg a \vee e = \top\\
\llbracket e\colon \!\!\to t\rrbracket(a) & = 0, \mathit{if}\, \events \models a \wedge e = \bot.
\end{align*}
\hrule
\caption{Definition of $\llbracket t \rrbracket(a)$ for $a \in \events_{\mathit{at}}$}
\label{FunCV}
\end{table}F
 
\subsection{Combining CVs with a probability function: expected values}
A $V_C$ expression, say $X$, denotes a value which is conditional on an event, 
that is it depends on the actual event $e$ chosen from $\events$. 
Therefore CVs are well-suited  for defining an expected value, denoted with $E_P(X)$.
The concept of an expectation lies at the basis of further definitions of 
probabilistic quantities such as variance, covariance, and correlation. 
Defining the expected value for a conditional value can be done if a besides a probability function, say $P$, $V_C$ expression 
in flat form is available, say $ \sum_{i= 1}^n e_i:\to v(t_i)$.
\[E_P(\sum_{i=1}^n e_i\!:\to v(t_i)) = \sum_{i =1}^n(  P(e_i))\cdot t_i).\]
These identities provide an axiom scheme for the function $E_P\!:CV \to V$.

Given a probability function structure  $\EPV(\events,\reals_0(\sg),\pf)$ and a CV structure
involving the same event space, say  
$\ECV(\events,\reals_{0}(\sg), \cv(\events,\reals_0(\sg)))$ a joint expansion exists.
Denoting the joint expansion with    
$\EPCV(\events,\reals_{0}(\sg), \cv(\events,\reals_0(\sg)),\pf)$ it can be further
expanded with an expected value operator named
$\widehat{E}_P$, interpreted  in compliance with the mentioned scheme, 
 to a structure $\EPCV(\events,\reals_{0}(\sg), \cv(\events,\reals_0(\sg)),\pf,\widehat{E}_P)$. Taken together for all event spaces $\events$
 and for all probability functions $\pf$
 the latter structures constitute a class of probability structures $K(\BA)$.

Instead of using an  axiom scheme,  a finite axiomatisation of $E_P(-)$ is given in Table~\ref{ExpVal}, 
from which each instance of the scheme can be derived. 
The equations (named $\EV\!_P$) of Table~\ref{ExpVal}  determine $E_P(-)$ on all $V_C$ expressions not involving
variables of sort $V_C$.

\begin{table}
\centering
\hrule
\begin{align}
E_P(X + Y) &= E_P(X) + E_P(Y)\\
E_P(x\!:\to v(y)) &= P(x) \cdot y
\end{align}
\hrule
\caption{$\EV_{\!P}$, axioms for the expected value operator, $x$ ranges over $E$, $y$ over $V$}
\label{ExpVal}
\end{table}

Grouping together the axioms collected thus far one finds an equational theory: 
$\MBPC_P = \BA+\Md+\DO+\Sign+\ABS+\PFBC_{\!P}+ \PFA_{P}+\UCV + \Sign_{cv} + \Md_{cv}+\Cond+\EV\!_P$ 
(meadow based probability calculus).
A plausible class of models for $\MBPC_{\!P}$ is 
$K(\BA)$. 
With a proof similar to that of Theorem~\ref{Compl:1}, it follows that $\MBPC_{\!P}$ is complete for such equations w.r.t. 
validity in $K(\BA)$.

$E_P$ can be eliminated from expressions of sort $V$ without free variables of sort $V_C$. 
Therefore an expression of sort $V$ without free variables of sort $V_C$  is provably  
equal within $\MBPC_P$ to an expression not involving subterms of sort $V_C$. 

\subsection{Variance, covariance, and correlation for conditional values}
\label{IDSQ}
On the basis of a definition of expectation, variance, covariance, 
and correlation  on conditional values can be introduced as derived operators as in~Table~\ref{More}.
\begin{table}
\centering
\hrule
\begin{align}
\mathit{VAR_P}(X) &= E_P(X^2) - (E_P(X))^2  \\
\mathit{COV}\!_P(X,Y) &= E_P(X \cdot Y) - E_P(X)\cdot E_P(Y)  \\
\mathit{CORR}^{sq}_P(X,Y) &= 
\frac{\mathit{COV}\!_P(X,Y)^2}{\mathit{VAR}_P(X) \cdot \mathit{VAR}_P(Y)} 
\end{align}
\hrule
\caption{$\EV_{\!P}$, axioms for variance, covariance, and correlation for conditional values}
\label{More}
\end{table}

Let $X$ and $Y$ 
be $V_C$ expressions with flat forms $X= \sum_{i= 1}^n e_i:\to v(t_i)$ and 
$Y= \sum_{i= 1}^m f_i:\to v(r_i).$ The equations in Table~\ref{ExpVal} provide explicit definitions  of variance, covariance, and 
correlation for $X$, resp. $Y$.
 
There is no novelty to these definitions except for the effort made to make each definition  fit a framework that 
has been setup on the basis of an algebraic specification. By proceeding in this manner an axiomatic framework 
is obtained for equational reasoning about each of these technical notions.

Forgetting the subscript for $E_P$, that is using $E(X)$ instead of $E_P(X)$, and similarly for the other operators, 
is common practice in probability theory. Doing so, however requires that it is 
apparent from the context which probability function is used. Moreover it must be assumed that for $X$ and for $Y$ 
the same probability function applies.

 \subsection{Extracting a probability mass function from a conditional value}
 Given a conditional value  $X= \sum_{i= 1}^{n} e_i\!:\to v(t_i)$ in non-overlapping flat form, 
 and a probability function $P$ 
 the probability mass function,  $\lambda x. P(X=x)$ for $X$ is supposed to yield for each value $x$ the 
 probability that $X$ 
takes value $x$.
  An explicit definition for the PMF of $X$ is as follows:
 \[\mathit{Pmf}_{\!P}(X) =L\, x_{ \in V}. \sum_{i = 1}^n(0(t_i-x) \cdot P(e_i)). \]
This specification of $\mathit{Pmf}_{\!P}$ is schematic and for that reason does not 
achieve the simplicity found for the expected value operation.
\begin{problem}
Can $\mathit{Pmf}_{\!P}$ be specified by means of a fixed and finite number of equations rather than with an axiom scheme
involving an equation for each non-overlapping closed $V_C$ expression?
\end{problem}
\begin{proposition}
Equivalence of definitions for expectation and  variance for CV expressions in 
non-overlapping flat form via (joint) PMFs extraction.

\begin{enumerate}
 \item $E_P(X) = E_{\mathit{pmf}}(\mathit{Pmf}_{\!P}(X)),$
\item
 $\mathit{VAR}_P(X) = \mathit{VAR}_{\mathit{pmf}}(\mathit{Pmf}_{\!P}(X)).$
\end{enumerate}
 \end{proposition}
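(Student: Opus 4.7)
The plan is to unfold both sides of each identity using the algebraic axioms, obtain the same explicit sum, and invoke the FSS identities listed after the definition of finite support summation.

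For part 1, I would first apply the axioms of $\mathit{EV}_P$ (additivity plus $E_P(e\!:\to v(x))=P(e)\cdot x$) to the flat form $X=\sum_{i=1}^{n} e_i\!:\to v(y_i)$, obtaining $E_P(X)=\sum_{i=1}^{n}P(e_i)\cdot y_i$. On the other side, the extracted PMF is $P(X=x)=\sum_{i=1}^{n}(0_{y_i-x}\cdot P(e_i))$. Because $\lambda x.(0_{y_i-x}\cdot P(e_i))$ has support at most $\{y_i\}$, the whole expression $P(X=\cdot)$ has finite support contained in $\{y_1,\dots,y_n\}$, so the FSS $\sum_{x}^\star$ is semantically well-defined. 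I would then compute
\[
E_{\mathit{pmf}}(\lambda x.P(X=x))
=\sum_{x}^\star\Bigl(x\cdot\sum_{i=1}^{n}(0_{y_i-x}\cdot P(e_i))\Bigr).
\]
Using distributivity of multiplication over $+$ in the meadow, finitary additivity of FSS (item 14 of the FSS list, applied $n-1$ times, permissible since each summand has finite support), and the fact that $x$ is not free in $y_i$ or $P(e_i)$, this equals $\sum_{i=1}^{n} P(e_i)\cdot\sum_{x}^\star(x\cdot 0_{y_i-x})$. Item 13 of the FSS list then gives $\sum_{x}^\star(x\cdot 0_{y_i-x})=y_i$, so both sides equal $\sum_{i=1}^{n} P(e_i)\cdot y_i$.

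For part 2, the crucial step is to exploit the nonoverlapping hypothesis: by the product-of-flat-forms proposition proven immediately before, $X\cdot X=\sum_{i,j}(e_i\wedge e_j)\!:\to v(y_i\cdot y_j)$, and since $e_i\wedge e_j=\bot$ whenever $i\neq j$, the cross terms vanish using $\bot\!:\to Z=v(0)$, leaving the nonoverlapping flat form $X^2=\sum_{i=1}^{n}e_i\!:\to v(y_i^2)$. Applying $\mathit{EV}_P$ again yields $E_P(X^2)=\sum_{i=1}^{n}P(e_i)\cdot y_i^2$. Replaying the argument of part 1 with $x^2$ in place of $x$ (the identity $\sum_{x}^\star(x^2\cdot 0_{y_i-x})=y_i^2$ follows by the same mechanism, since $x^2\cdot 0_{y_i-x}$ is supported only at $x=y_i$ where it takes value $y_i^2$) gives $\sum_{x}^\star(x^2\cdot P(X=x))=\sum_{i=1}^{n}P(e_i)\cdot y_i^2$ as well. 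Subtracting the square of the expectation, which is common to both definitions by part 1, delivers the equality of the two variances.

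The main obstacle is justifying the interchange of the inner finite sum over $i$ with the FSS operator $\sum_{x}^\star$. Since the total expression has finite support, a cleaner route is to argue semantically: evaluate both sides pointwise at each $x\in\{y_1,\dots,y_n\}$ and observe that the sums agree termwise, then sum the (finitely many) nonzero contributions. A minor side issue is the possibility that some $y_i$ coincide or that some $y_i=0$; both are harmless because the extracted PMF aggregates the probabilities at coinciding values and contributions with $y_i=0$ vanish on both sides symmetrically. A second subtlety worth noting is that the $e_i$'s need not partition $\top$, so $P(X=\cdot)$ may not literally satisfy $\sum_{x}^\star P(X=x)=1$; however, the claim only concerns the equality of the two expressions, and the argument above uses no normalisation property of the extracted mass function.
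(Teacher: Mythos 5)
Your argument for part 1 is essentially the paper's own proof: both unfold $E_{\mathit{pmf}}(\lambda x.P(X=x))$ to $\sum_x^\star\bigl(x\cdot\sum_{i=1}^n(0_{y_i-x}\cdot P(e_i))\bigr)$, interchange the finite sum over $i$ with $\sum_x^\star$ via the additivity fact for FSS, and finish with the identity $\sum_x^\star(x\cdot 0_{y_i-x})=y_i$ (item 13 of the FSS list), exactly as in the paper. Where you go beyond the paper is part 2: the paper's proof stops after the expectation-value computation and never addresses the variance, whereas you supply the needed step --- flattening $X^2$ to $\sum_{i=1}^n e_i\!:\to v(y_i^2)$ using the nonoverlapping hypothesis so that the cross terms $(e_i\wedge e_j)\!:\to v(y_iy_j)$ collapse to $v(0)$, then replaying the part~1 computation with $x^2$ in place of $x$. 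That addition is correct and is in fact the only place where the nonoverlapping assumption is genuinely used, so your version is the more complete of the two; your closing remarks on coinciding values of $y_i$ and on the $e_i$ not covering $\top$ are accurate and harmless to the argument.
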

 
 \label{PMFext}
  \begin{proof} Let $ X= \sum_{i= 1}^{n} e_i\!:\to v(t_i)$ be a non-overlapping flat $V_C$ expression. 
  Making use of the facts listed in Paragraph~\ref{FSS}, one obtains: 
  $E_{pmf}(L\, x.P(X = x)) = 
    \sum_x^\star\sum_{i = 1}^n(0(t_i-x) \cdot P(e_i))=\\
  \sum_{i = 1}^n\sum_x^\star(0(t_i-x) \cdot P(e_i))=\sum_{i = 1}^n \sum_x^\star(0(t_i-x) \cdot P(e_i))=
  \sum_{i = 1}^n(t_i \cdot P(e_i))= E_P(X).$
 \end{proof}

 \subsection{Joint PMF extraction for event sharing conditional values}
 Two conditional values are event sharing if both have conditions over the same domain. 
 Extraction of a joint PMF  from event sharing conditional values works as follows.
 Given two $V_C$ expressions $X$ and $Y$ with similar nonoverlapping flat forms $\sum_{i= 1}^{n}(e_i\!:\to t_i)$ and 
  $\sum_{i= 1}^{n}(e_i\!:\to r_i)$  the joint PMF for these conditional values, denoted by $P(X = x,Y=y)$, is defined by
\[P(X=x,Y=y)  = \sum_{i= 1}^{n}(0(t_i-x) 	\cdot 0(r_i-y)\cdot P(e_i)).\]

Extending Proposition~\ref{PMFext} the following connections between definitions involving a conditional value and 
definitions involving a PMF or a joint PMF can be found.
\begin{proposition} Equivalence of definitions for covariance and correlation (squared) via CVs and via (joint) PMFs. 
\begin{enumerate}
 \item
$ \mathit{COV}_{\!P}(X,Y) = \mathit{COV}_{\!\mathit{pmf}}(L\, x,y.P(X=x,Y=y)),$
\item
 $ CORR^{sq}_P(X,Y) = CORR_{\mathit{pmf}}^{sq}(L\, x,y.P(X=x,Y=y)).$
\end{enumerate}
\end{proposition}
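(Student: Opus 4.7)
The plan is to reduce both statements to computations involving finite support summation and then to invoke the previous Proposition, so that the bulk of the work consists in verifying that the marginalisations of the joint PMF $G(x,y) = P(X=x,Y=y)$ coincide with the univariate PMFs of $X$ and $Y$, and that the cross product $E_P(X\cdot Y)$ agrees with the bivariate FSS of $x\cdot y \cdot G(x,y)$.

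First I would compute the marginal $G_{(1)}(x) = \sum_y^{\star} G(x,y)$. Substituting the definition of $G$ and using the FSS facts from Paragraph~\ref{FSS} (commutation of FSS with multiplication by factors not depending on the summation variable, together with $\sum_y^\star 0_{z_i - y} = 1$), one obtains $G_{(1)}(x) = \sum_{i=1}^{n}(0_{y_i - x}\cdot P(e_i))$, which is exactly $P(X = x)$. By symmetry $G_{(2)}(y) = P(Y=y)$. Invoking Proposition~\ref{PMFext} then yields $E_P(X) = E_{\mathit{pmf}}(G_{(1)})$, $E_P(Y) = E_{\mathit{pmf}}(G_{(2)})$, $\mathit{VAR}_P(X) = \mathit{VAR}_{\mathit{pmf}}(G_{(1)})$ and $\mathit{VAR}_P(Y) = \mathit{VAR}_{\mathit{pmf}}(G_{(2)})$.

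Next I would use the crucial observation that, because $X$ and $Y$ are given in \emph{similar} nonoverlapping flat form over the same conditions $e_1,\ldots,e_n$, the product $X\cdot Y$ simplifies dramatically: the proposition on the product of flat forms gives $X\cdot Y = \sum_{i,j}(e_i \wedge e_j)\!:\!\to v(y_i\cdot z_j)$, and nonoverlap forces $e_i \wedge e_j = \bot$ for $i\neq j$, so $X\cdot Y = \sum_{i=1}^{n} e_i\!:\!\to v(y_i\cdot z_i)$. Hence $E_P(X\cdot Y) = \sum_{i=1}^{n} P(e_i)\cdot y_i \cdot z_i$. On the other side, inserting the definition of $G$ into $\sum_{x,y}^{\star}(x\cdot y\cdot G(x,y))$ and repeatedly applying the FSS identities lets me pull $P(e_i)$ out and collapse the double sum using $\sum_{x,y}^{\star}(x\cdot y\cdot 0_{y_i-x}\cdot 0_{z_i-y}) = y_i\cdot z_i$, producing the same expression $\sum_{i=1}^{n} P(e_i)\cdot y_i\cdot z_i$. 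Subtracting $E_P(X)\cdot E_P(Y) = E_{\mathit{pmf}}(G_{(1)})\cdot E_{\mathit{pmf}}(G_{(2)})$ from both, item~(1) follows. Item~(2) is then immediate: plugging the established identities for $\mathit{COV}$, $\mathit{VAR}_P(X)$ and $\mathit{VAR}_P(Y)$ into the explicit definitions of $\mathit{CORR}^2_P$ and $\mathit{CORR}^2_{\mathit{pmf}}$ yields syntactically identical meadow expressions.

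The main obstacle is technical rather than conceptual: one must check that every FSS manipulation performed above is legitimate, i.e. that the relevant lambda-abstractions really have finite support so that the identities from Paragraph~\ref{FSS} (distribution of constants over $\sum^\star$, splitting of sums, and the $\sum_{x}^\star(x\cdot 0_{t-x}) = t$ rule) may be applied inside a cancellation meadow. Finite support is clear for each summand since each is a product by $0_{y_i - x}$ or $0_{z_i - y}$, but the reduction from the bivariate FSS $\sum_{x,y}^\star$ to two nested univariate FSSs requires care, in view of the cautionary example in the subsection on multivariate FSS; here it works because the finiteness of the total support is already guaranteed by the outer factor $P(e_i)\cdot 0_{y_i-x}\cdot 0_{z_i-y}$, which vanishes outside the single point $(y_i,z_i)$.
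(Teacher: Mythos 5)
The paper states this proposition without proof, offering it only as an ``extension'' of the preceding proposition on expectation value and variance, so there is no official argument to compare against; your proof is correct and is exactly the natural continuation of the paper's method, namely the FSS manipulations of Paragraph 2.1 applied to the explicit joint-PMF extraction formula. The one substantive ingredient you add --- and it is the right one --- is that similarity and nonoverlap of the flat forms collapse the product to $X\cdot Y=\sum_{i=1}^{n}e_i\!:\to v(y_i\cdot z_i)$, which is what makes $E_P(X\cdot Y)$ agree with $\sum_{x,y}^{\star}(x\cdot y\cdot P(X=x,Y=y))$; note only that this hypothesis of similar nonoverlapping flat forms is implicit in the statement, being inherited from the definition of $P(X=x,Y=y)$ in the preceding subsection, and your remark that the bivariate FSS is evaluated directly on a one-point support (rather than by iterating univariate FSS) correctly sidesteps the paper's own cautionary example about nesting.
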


\section{The multidimensional case}
\label{MDC}
In the multidimensional case the event space is considered a  product of event spaces. 
In the multi-dimensional case CVs  occurring in a vector of CVs are supposed by default not to be event space sharing and 
the notion of a joint probability function working over a tuple of event spaces enters the picture.

The multi-dimensional case becomes relevant once tuples (vectors) of CVs are considered in combination with 
a plurality of joint probability functions for product spaces of higher dimensional event space corresponding to 
various vectors of CVs such that there may not exist a joint probability function for the 
full product space.

\subsection{Multidimensional  probability functions}
Let $D= \{a_1,\ldots,a_n\}$ be a finite set. The elements of $D$ will be called dimensions. $D$ is called a dimension set, and 
it is assumed that $n = \#(D)$.
\begin{definition} (Arities over $D$) $ar_D$, the collection of arities over dimension set $D$,  denotes the 
set of finite non-empty sequences of elements of $D$ without repetition. 
\end{definition}
Elements of $ar_D$ will serve as arities of probability functions on multi-dimensional event spaces. 
$l(w)$ denotes the length of $w \in ar_D$. 
\begin{definition} (Arity family)
Given an event space $E$, and a name $P$ for a probability function, an arity family (for $E$ and $P$)  is a 
finite subset $W$ of $ar_D$ which is (i) closed under permutation, and (ii) closed under taking non-empty subsequences, 
and (iii) which contains for each $d \in D$ the arity $(d)$, that is the one-dimensional arity consisting of dimension $d$ only.
\end{definition} 
For each dimension $d \in D$ the presence of a sort  $E_d$ of events for dimension $d$ is assumed. For simplicity of notation it is assumed that these sorts are identical, so that only a sort $E$ is required.

\begin{definition}
A probability family (denoted $\PFF_W$)  for an arity family $W \subseteq ar_D$ consists of a 
probability function $P_w \colon E^{l(w)} \to V$ for each  $w \in W$,  such that for all $w \in W$ each  the axioms in 
Table~\ref{PFF} (taken from~\cite{BP2016}) are satisfied. \end{definition}
The axioms of Table~\ref{PFF} case correspond to the axioms for a probability 
function of Table~\ref{PF} in the one dimensional case.

Because in an arity repetition of dimensions is disallowed these axioms reduce to 
what we had already in the case of a single dimension. 

\begin{table}
\centering
\hrule
\begin{align}
\label{eq:wperm}
P^{d,u,e,u^\prime}(y_1,x_1\ldots,x_{l},y_2,z_{1},\dots,z_{l^\prime}) &= P^{e,u,d,u^\prime}(y_2,x_1\ldots,x_{l},y_1,z_{1},\dots,z_{l^\prime}) \\
\label{eq:23:w}
P^{d}(\top) &=1\\
P^{d}(\bot) &=0\\
\label{eq:25:w}
P^{d,w}(\top,x_1,\ldots,x_{n}) &= P^{w}(x_1,\ldots,x_{n})\\
\label{eq:24:w}
P^{d,w}(\bot,x_1,\ldots,x_{n}) &=0\\
\label{eq:25:w}
P^w(x_1,\ldots,x_{n})  & = |P^w(x_1,\ldots,x_{n})|\\
\label{eq:26:w}
P^{d,u}(x \vee y,x_1,\ldots,x_{l}) &= P^{d,u}(x,x_1,\ldots,x_{l})+P^{d,u}(y,x_1,\ldots,x_{l}) \nonumber \\
&\quad -  P^{d,u}(x \wedge y,x_1,\ldots,x_{l})  
\end{align}
\hrule
\caption{$\PFF_{W,P}$:  axioms for a probability  function family with name $P$ 
(with $d,e \in D$, $w, (d,u), (e,u,d,u^\prime) \in W, n = l(w)$, and 
$ u, u^\prime \in ar_D \cup \{\epsilon\}, l = l(u), l^\prime = l(u^\prime).$}
\label{PFF}
\end{table}

\subsection{Multivariate conditional values}
Just as in the one-dimensional case, multivariate conditional values are the elements of sort $V_C$.
$V_C$ has, besides the embedding $v$ from $V$ into $V_C$ (which must meet the requirements of Table~\ref{UCV}),
for each $d \in D$ a constructor $-: \to_d -$ of type $E \times V_C \to V_C$. $-: \to_d -$ must satisfy the 
requirements $\Cond_d$ which result from $\Cond$ in Table~\ref{CondEqs} by replacing operator $-:\to-$ by $-:\to_d-$ in all equations.
In addition to these requirements the equations $\Cond_{mv}$ of Table~\ref{CondMv} must be satisfied for all different pairs $a,b \in D$.

\begin{table}
\centering
\hrule
\begin{align}
e:\to_a (f :\to_b X) = f:\to_b (e :\to_a X)
\end{align}
\hrule
\caption{$\Cond_{mv}$: commuting multivariate condition constructors}
\label{CondMv}
\end{table}

\subsection{Expected value operators}
For a specification of the expected value operator it is assumed that $d_1,\ldots,d_n$ is an enumeration without repetitions of $D$.
For each $w \in W$ a separate expected value operator $E^w_{P}$ arises. Each operator is specified by means of two equations
as displayed in Table~\ref{ExpValMv}.
\begin{table}
\centering
\hrule
\begin{align}
E^w_{P}(X + Y) &= E^w_{P}(X) + E^w_{P}(Y)\\
E^w_{P}(x_1\!:\to_{d_1}(\ldots (x_n\!:\to_{d_n} v(y)\dots)) &= P_w(x_1,\ldots,x_n) \cdot y
\end{align}
\hrule
\caption{$\EV_{\!P,w}$, axioms for the expected value operator for arity $w$}
\label{ExpValMv}
\end{table}

Given the multi dimensional expected value operator, corresponding operators for 
variance, covariance, and correlation can be derived un the usual manner.

\subsection{Summing up}
Collecting the equations mentioned thus far for the multidimensional setting 
the axiom system 
 $\MBPC^W_P = \BA+\Md+\DO+\Sign+\ABS+ \UCV + \Sign_{cv} + \Md_{cv}+\Cond_{d\, (d \in D)} +\PFF_{W,P} + \EV_{\!P,w (w \in W)}$ is obtained.
 
 Completeness of these axiomatisations can be shown with the same methods as for the 1D case.
The design of these structures can be somewhat simplified if for each subset of $D$ at most a single probability function is admitted,
having the arguments for the different dimensions in a fixed order. When adopting this alternative, Table~\ref{PFF} needs to be redesigned as follows: permutation axioms are dropped and axioms involving the first argument must be replicated for each argument position.

\section{Concluding remarks}
This paper is a sequel to~\cite{BP2016} where a meadow based approach to the equational 
specification of probability functions was proposed. In~\cite{BM2015} probabilistic choice is 
formalised with the meadow of reals as a number system. The equations in that paper demonstrate, 
just as well as the equations in Table~\ref{PF}, an attractive compatibility between the requirements of 
probability calculus and the treatment of division in a meadow.

In~\cite{ShaferV2006} an extensive survey is presented of the history leading up to Kolmogorov's choice of axioms, 
and to Kolmogorov's claim that these axioms are what probability is about. The equations in $\PFBC_{\!P}+ \PFA_P$ do not take the 6th axiom into account, however, which 
asserts that if  $(e_i)_{i \in \nats}$ is an infinite descending chain of 
events such that only $\bot$ is below each element of the chain, then 
$\lim_{i \to \infty}P(e_i) = 0$. A closer resemblance with Kolmogorov's original axioms is found if the
equation in Table~\ref{PF} is replaced by the conditional equation $e \wedge f  = \bot \to P(e \vee f) = P(e) + P(v)$. 
This replacement produces a logically equivalent axiom system. The equation of Table~\ref{PF} is 
preferred because it is logically simpler than a conditional equation.

Conditional values
play the role of a discrete random variables with finite range. 
By working with conditional values the use of a sample space underlying the event space is avoided 
which helps to maintain the style and simplicity of the axiomatisation of probability functions of~\cite{BP2016}.
Instead of including an additional sort $V_C$, the conditional values might be viewed as an extension of the sort $V.$
A reason for not doing so, however, is to prevent $P$ from taking values of  the form say $P(e) = f\!:\to v(1/2)$.

Regarding the choice of terminology, 
the presence of alternative options must be mentioned, for instance in~\cite{BertsekasT2008}
a probability function is referred to as a probability law.

As a technical tool finite support summation is introduced, a novel binding operator on meadows. 
Finite support summantion is of independent interest for the theory of meadows and it gives rise to intriguing new questions. 
Further it is worth mentioning that working with $1/0 = 0$ 
in matrix theory is pursued in e.g.~\cite{MatsuuraS2016}.

For the derived operations $1(-)$ and $0(-)$ of Table~\ref{DO}
the original notation from~\cite{BBP2013,BBP2015} is $1(x) =1_x$, resp. $0(x)=0_x$, 
which notations may still be used as alternatives. The chosen notation is preferable if a sizeable expression is substituted for $x$.
Table~\ref{Md} makes use of inversive notation.
  The phrase ``inversive notation'' was coined in \cite{BM2011a} where it stands 
  in contrast with ``divisive notation'' which involves a two place division operator symbol. In~\cite{BM2011a} the equivalence 
  of both notations is discussed.
  Two place division   is provided as a derived operation in Table~\ref{DO}. Division commonly
  appears  in a plurality of syntactical forms: $ x {:} y, x/y,\sfrac{x}{y}$, and $\frac{x}{y}$.
  These diverse forms are not in need of a separate defining equation, just as much as in the specification of a 
  meadow no mention is made of the existing notational variation for multiplication (viz. $x \times y, x \cdot y, x.y$ and $xy$).

\paragraph{Acknowedgement} Yoram Hirschfeld, Kees Middelburg and Alban Ponse gave useful comments on a previous version of the paper.
\addcontentsline{toc}{section}{References}

\appendix
\section{Random variables}
\label{MVF}
The notion of a random variable plays a central role in many presentations of probability theory. 
In the presentation of the current paper the role of random variables is played by a conditional values (CVs) instead. 
In this Appendix it will be outlined how to view a CV as a random variable provided that the event space is finite. 

\subsection{From implicit sample space to explicit sample space}
Given event space $\events$, the subset of its domain  $\events_{\mathit{at}}$ consisting of atoms 
as defined in Paragraph~\ref{MofCV} can be taken for the corresponding sample space and then
a random variable is supposed to be a function from sample space to values. 
Viewing $\events_{\mathit{at}}$ as a sample space, for each close conditional value expression $X$, the function
 $\llbracket X \rrbracket$, as specified in Table~\ref{FunCV}, qualifies as a
random variable. 
%because every closed $V_C$ expression $X$ determines a function 
%$\llbracket X \rrbracket $  from $\events_{\mathit{at}}$ to $V$. 

I prefer not to have $\events_{\mathit{at}}$ as a sort because the resulting setting with 
$\events_{\mathit{at}}$ as a subsort of $\events$
  is not easily  reconciled with equational logic. 
  Logical difficulties with the equational logic of subsorts persist  in spite of the presence of  many works that have been 
  devoted to that particular complication.

Now summation over the sample space $\events_{\mathit{at}}$ is specified as follows.
For an event space $\events$ and a term $t$ of sort $V$, then $\sum_{\alpha \in \events_{\mathit{at}}}^\star t=0$   if there 
are either none or infinitely many atomic events in $|\events|$ and otherwise 
\[\sum_{\alpha \in \events_{\mathit{at}}}^\star t=[a_i/\alpha]t+ \ldots + [a_k/\alpha]t\] with 
$a_1,\ldots,a_k$ an enumeration without repetitions of the atomic events of $\events$.
 Provided $\events$ is finite, the expectation of  $\llbracket X \rrbracket$ can be defined by summation over the sample space, using an identity which lies outside first order equational logic:
\[E_P(\llbracket X \rrbracket) = \sum_{\alpha \in \events_{\mathit{at}}}^\star ( \llbracket X \rrbracket(\alpha) \cdot P(\alpha))\]
\subsection{Random variables in colloquial language}
Random variables play a key role in many accounts of probability theory. 
However, the concept of a random variable seems to be rather informal and its use is often cast in colloquial language. 
A common wording states that  
 ``a random variable is the outcome of a stochastic process''. 
 Complicating an understanding of a random variable, however, is the fact that the mathematical definition of it, 
which reads ``a function from sample space to reals'' makes no reference to any variable or variable name, 
or to a probability function, or to a stochastic mechanism. 
In~\cite{W2016} it is asserted about a random variable that it is: 
\begin{quote}
... a variable whose value is subject to variations due to chance (i.e. randomness, in a mathematical sense).... 
A random variable can take on a set of possible different values (similarly to other mathematical variables), 
each with an associated probability, in contrast to other mathematical variables.'
\end{quote}
In~\cite{KA2016} a random variable is explained as a mapping from ``outcomes'' to values which provides quantification, while
 the main argument put forward for the introduction of a random variable is about the use of its name,
 and at the same time the suggestion is made that a 
 random variable is linked to a probability function.
% \footnote{%
% The relation between an RV and a PF is like the relation between a car and a driver. 
% In colloquial language a car has a driver but no reference is made to any specific specific person playing that role, 
% and a car can be grasped separately from its driver. Whenever the car is used ``as a car'', however, it has a driver, at least until recently.}
In~\cite{BertsekasT2008}  it is stated that
\begin{quote} A discrete random variable has an associated probability mass function ..
\end{quote}
In the introductory probability refresher of~\cite{Barber2012} the domain of a variable is said to be the set of states it can take, 
while the relation between (random) variables and events is explained as follows:
\begin{quote} For our purposes, events are expressions about random variables, such as \emph{Two heads in 6 coin tosses}.
\end{quote}


\begin{thebibliography}{58}

\bibitem{Barber2012}
D. Barber.
\newblock {\em Bayesian Reasoning and Machine Learning.}
\newblock Cambridge University Press, 2012.
(ISBN     0521518148, 9780521518147). 
On-line version available at 
\url{http://web4.cs.ucl.ac.uk/staff/D.Barber/pmwiki/pmwiki.php?n=Brml.Online} 
(consulted version: 18 June 2013).




\bibitem{BBP2013}
J.A.\ Bergstra, I.\ Bethke, and A.\ Ponse.
\newblock Cancellation meadows: a generic basis theorem and some applications.
\newblock {\em The Computer Journal}, 56(1):3--14, 2013.
%(\url{doi:10.1093/comjnl/bxs028}).


\bibitem{BBP2015}
J.A.\ Bergstra, I.\ Bethke, and A.\ Ponse.
\newblock Equations for formally real meadows.
\newblock {\em Journal of Applied Logic}, 13(2) part B:1--23, 2015.
%(\url{doi:10.1016/j.jal.2015.01.004}).

\bibitem{BP2016}
Jan A.\ Bergstra and Alban Ponse. 
\newblock  Probability functions in the context of signed involutive meadows.
\newblock \emph{in: Recent Trends in Algebraic Development Techniques}, 
Eds. Philip James \& Markus Roggenbach, Proc. 23th IFIP WG1.2 International Workshop WADT,
Springer LNCS 10644, 73--87, (also \url{https://arxiv.org/pdf/1307.5173.pdf}), 2017. 


\bibitem{BM2011a}
J.A.\ Bergstra and C.A.\ Middelburg. 
\newblock Inversive meadows and divisive meadows.
\newblock \emph{Journal of Applied Logic}, 9(3): 203--220, 2011.
%Also available at \url{http://arxiv.org/abs/0907.0540}.

\bibitem{BM2015}
J.A.\ Bergstra and C.A.\ Middelburg. 
\newblock Probabilistic thread algebra.
\newblock \emph{SACS}, 25(2): 211--243, 2015.

\bibitem{BT2007}
J. A.\ Bergstra and J. V.\ Tucker.  
\newblock The rational numbers as an abstract data type.  
\newblock {\em J. ACM}, 54, 2, Article 7 (April 2007) 25 pages, 2007.

\bibitem{BertsekasT2008}
D.P.\ Bertsekas and J.N. Tsitsiklis.
\newblock \emph{Introduction to Probability}, Athena Scientific, Nashua USA, ISBN 978-1-886529-23-6, 2008.

\bibitem{DavidsonS1956}
D.\ Davidson and P.\ Suppes.
\newblock A Finitistic Axiomatization of Subjective Probability and Utility.
\newblock Econometrica 24 (3) 264-275, 1956.

\bibitem{Halpern1990}
J.Y.\ Halpern.
\newblock An analyisis of first-order logics of probability.
\newblock Artificial Inteligence 46, 311-350, 1990.

\bibitem{KA2016}
Khan Academy.
\newblock Random variables and probability distributions.
\texttt{https://\\www.khanacademy.org/math/probability/random-variables-topic/\\
	random-variables-prob-dist/v/discrete-and-continuous-random-variables},\\ (consulted July 9 2016).

\bibitem{KoymansV1985}
C.P.J.\ Koymans,  and J. L. M.\ Vrancken.
\newblock Extending process algebra with the empty process.
\newblock Electronic, report LGPS 1. Dept. of Philosophy, State University of Utrecht, The Netherlands (1985).

\bibitem{MatsuuraS2016}
T.\ Matsuura and S.\ Saitoh.
\newblock Matrices and Division by Zero.
\newblock Advances in Linear Algebra \& Matrix Theory 6: 51-58  (\url{http:dx.doi.org/10.4236/alamt.2016.62007}), (2016).

\bibitem{NicaudBG2001}
J-. F.\ Nicaud,  D.\ Bouhineau, and J-. M.\ Gelis.
\newblock Syntax and semantics in algebra.
\newblock \emph {Proc. 12th ICMI Study Conference, The University of Melbourne, 2001}. HAL archives-ouvertes
\url{https://hal.archives-ouvertes.fr/hal-00962023/document}, (2001).


\bibitem{Pad83}
H.\ Padmanabhan. A self-dual equational basis for Boolean algebras.
\newblock \emph{Canad. Math. Bull.}, 26(1):9--12, (1983).

%\bibitem{Ramsey1931}
%F.P.\ Ramsey.
%\newblock Truth and Probability (1926).
%\newblock in Ramsey, (1931) \emph{The Foundations of Mathematics and other Logical Essays} Ch. VIII, 199-211, edited by R.B.\ Braithwaite,
%London, New York, (1999 electronic edition).

\bibitem{ShaferV2006}
G.\ Shafer and V.\ Vovk.
\newblock The Sources of Komogorov's \emph{Grundbegriffe}.
\newblock Statistical Science, 21 (1) 70-98, (2006).

\bibitem{W2016}
Wikipedia.
\newblock \url{https://en.wikipedia.org/wiki/Random_variable}, (consulted July 9, 2016).

\end{thebibliography}
\end{document}